\newtheorem{theorem}{Theorem}[section]
\newtheorem{definition}[theorem]{Definition}
\newtheorem{corollary}[theorem]{Corollary}
\newtheorem{remark}[theorem]{Remark}
\newtheorem{example}[theorem]{Example}
\numberwithin{equation}{section}
\begin{document}

\title{Banach spaces of linear operators and homogeneous polynomials without the approximation property}
\author{{{Sergio A. Pérez}{\thanks{S.Pérez was supported by CAPES and CNPq, Brazil. (corresponding author) }}}\\{\small  IMECC, UNICAMP} \\{\small {Rua S\'{e}rgio Buarque de Holanda, 651, CEP 13083-859, Campinas-SP,
Brazil.}}\\{\small \texttt{Email:Sergio.2060@hotmail.com}}\\\vspace{-0.2cm}}
\date{}
\maketitle

\begin{abstract}

We present many examples of Banach spaces of linear operators and homogeneous polynomials without the approximation property, thus  improving results of Dineen and Mujica \cite{JORGE} and Godefroy and Saphar \cite{THREE}.

{\small \bigskip\noindent\textbf{AMS MSC:}46B28, 46G25 }

{\small \medskip\noindent\textbf{Keywords:} Banach space, linear operator, compact operator, homogeneous polynomial, approximation property, complemented subspace.}

\end{abstract}

\renewcommand{\baselinestretch}{1.1}

\section{Introduction}

The approximation property was introduced by Grothendieck \cite{GR}. Enflo \cite{ENFLO}
gave the first example of a Banach space without the approximation property.
Enflo’s counterexample is an artificially constructed Banach space. The first naturally
defined Banach space without the approximation property was given by
Szankowski \cite{SZ}, who proved that the space $\mathcal{L}(\ell_{2};\ell_{2} ) $ of continuous linear operators
on $\ell_{2}$ does not have the approximation property. Later Godefroy and
Saphar \cite{THREE} proved that, if $\mathcal{L}_{K}(\ell_{2}; \ell_{2})$ denotes the subspace of all compact members
of $\mathcal{L}(\ell_{2};\ell_{2} ) $, then the quotient $\mathcal{L}(\ell_{2};\ell_{2} )/\mathcal{L}_{K}(\ell_{2}; \ell_{2}) $ does not have the approximation
property.

Recently Dineen and Mujica \cite{JORGE} proved that if $1< p\leq q<\infty$, then $\mathcal{L}(\ell_{p};\ell_{q} )$ does not have the approximation property. They also proved that if $1< p<\infty$ and $n\geq p$, then the space $\mathcal{P}(^{n}\ell_{p})$ of continuous n-homogeneous polynomials on $\ell_{p}$ does not have the approximation property.

In this paper, by refinements of the methods of Dineen and Mujica \cite{JORGE} and Godefroy and Shapar \cite{THREE}, we present many examples of Banach spaces of linear operators and homogeneous polynomials which do not have the approximation property.

In Section $2$ we present some examples of Banach spaces of linear operators without the approximation property. Among other results, we show that
if $1<p\leq q<\infty$, and $E$ and $F$ are closed infinite dimensional subspaces of $\ell_{p}$ and $\ell_{q}$, respectively, then $\mathcal{L}(E; F)$ does not have the approximation property. This improves a result of Dineen and Mujica \cite{JORGE}. We also show that if $1<p\leq q<\infty$, and $E$ and $F$ are closed
infinite dimensional subspaces of $\ell_{p}$ and $\ell_{q}$, respectively, then the quotient  $\mathcal{L}(E; F)/\mathcal{L}_{K}(E; F)$ does not have the approximation property. This improves a result of Godefroy and Saphar \cite{THREE}.

In Section $3$ we present more examples of Banach spaces of linear operators without the approximation property. Our examples are Banach spaces of linear operators on Pelczynski's universal space $U_{1}$, on Orlicz sequence spaces $\ell_{M_{p}}$, and on Lorentz sequence spaces $d(w,p)$.

In Section $4$ we present examples of Banach spaces of homogeneous polynomials without the approximation property.
Among other results we show that if $1<p<\infty$ and $E$ is a closed infinite dimensional subspace of $\ell_{p}$, then $\mathcal{P}(^{n}E)$ does not have
the approximation property for every $n\geq p$. This improves another result of Dineen and Mujica \cite{JORGE}. We also show that if $1<p\leq q<\infty$, and
$E$ and $F$ are closed infinite dimensional subspaces of $\ell_{p}$ and $\ell_{q}$, respectively, then $\mathcal{P}(^{n}E; F)$ does not have the approximation
property for every $n\geq 1$. We also show that if $n<p\leq q<\infty$, and $E$ and $F$ are closed infinite dimensional subspaces of $\ell_{p}$ and $\ell_{q}$, respectively, then the quotient  $\mathcal{P}(^{n}E; F)/\mathcal{P}_{K}(^{n}E; F)$ does not have the approximation property.
\bigskip

This paper is based on part of the author's doctoral thesis at the Universidade Estadual de Campinas. This research has been supported by CAPES and CNPq.

The author is grateful to his thesis advisor, Professor Jorge Mujica, for his advice and help. He is also grateful to Professor Vinicius
Fávaro for some helpful suggestions.

\section{Banach spaces of linear operators without the approximation property}

Let $E$ and $F$ denote Banach spaces over $ \mathbb{K}$, where $ \mathbb{K}$ is $ \mathbb{R}$ or  $\mathbb{C}$. Let $E^{\prime}$
denote the dual of $E$. Let $ \mathcal{L}(E;F)$ (resp. $\mathcal{L}_{K}(E;F)$) denote  the space of all bounded (resp. compact)
linear operators from $E$ into $F$.

Let us recall that $E$ is said to have the approximation property if given $K \subset E$
compact and $\epsilon > 0$, there is a finite rank operator $T\in \mathcal{L}(E;E)$ such that $\|Tx-x\| < \epsilon$
for every $x\in K$. If $E$ has the approximation property, then every complemented
subspace of $E$ also has the approximation property. $E$ is said to have the bounded approximation property if there exists $\lambda \geq 1$ so that for every compact subset $K \subset E$ and for every $\epsilon >0$ there is a finite rank operator $T\in \mathcal{L}(E;E)$ such that $\|T\|\leq \lambda$ and $\|Tx-x\| < \epsilon$
for every $x\in K$.
$E$ is isomorphic to a complemented subspace of $F$ if and only if there are $A \in
\mathcal{L}(E; F)$ and $B\in \mathcal{L}(F;E)$ such that $B\circ A = I$. These simple remarks will be repeatedly
used throughout this paper.

\begin{theorem}\label{proposicion 1}
Let $E$ and $F$ be Banach spaces. If $E$ and $F$  contain complemented subspaces isomorphic to $M$ and $N$, respectively, then $\mathcal{L}(E;F)$ contains
a complemented subspace isomorphic to $\mathcal{L}(M;N)$.
\end{theorem}

\begin{proof}
 By hypothesis there are $A_{1}\in \mathcal{L}(M;E)$, $B_{1}\in \mathcal{L}(E;M)$, $A_{2}\in \mathcal{L}(N;F)$, $B_{2}\in \mathcal{L}(F;N)$ such that $B_{1}\circ A_{1}=I$ and $B_{2}\circ A_{2}=I$. Consider the operators
$$C: S\in \mathcal{L}(M;N)\rightarrow  A_{2}\circ S\circ B_{1}\in \mathcal{L}(E;F)$$
and
$$D: T \in \mathcal{L}(E;F)\rightarrow  B_{2}\circ T\circ A_{1}\in \mathcal{L}(M;N).$$
Then $D\circ C=I$ and the desired conclusion follows.
\end{proof}

The next theorem improves  \cite[Proposition 2.4]{JORGE}.

\begin{theorem}\label{corolario 1}

Let $1<p\leq q<\infty$. If $E$ and $F$  contain complemented subspaces isomorphic to $\ell_{p}$ and $\ell_{q}$, respectively, then $\mathcal{L}(E;F)$  does not have the approximation property.

\end{theorem}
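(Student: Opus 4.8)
The plan is to combine Theorem~\ref{proposicion 1} with the known result of Dineen and Mujica. By Theorem~\ref{proposicion 1}, since $E$ and $F$ contain complemented subspaces isomorphic to $\ell_p$ and $\ell_q$ respectively, the space $\mathcal{L}(E;F)$ contains a complemented subspace isomorphic to $\mathcal{L}(\ell_p;\ell_q)$. So the entire problem reduces to showing that $\mathcal{L}(\ell_p;\ell_q)$ itself fails the approximation property, together with the hereditary behaviour of the approximation property under complementation.

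First I would invoke the Dineen--Mujica result \cite[Proposition 2.4]{JORGE}, which states precisely that for $1<p\leq q<\infty$ the space $\mathcal{L}(\ell_p;\ell_q)$ does not have the approximation property. This is stated in the introduction as an established fact, so I may assume it. Second I would recall the standard permanence property, noted in the preliminary remarks of this section: if a Banach space has the approximation property, then so does every complemented subspace of it. Taking the contrapositive, if a complemented subspace fails the approximation property, then the whole space must also fail it.

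Putting these together: $\mathcal{L}(E;F)$ contains a complemented copy of $\mathcal{L}(\ell_p;\ell_q)$, the latter lacks the approximation property, and therefore $\mathcal{L}(E;F)$ cannot have the approximation property either, since otherwise its complemented subspace $\mathcal{L}(\ell_p;\ell_q)$ would inherit it, a contradiction. This yields the desired conclusion.

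There is no serious obstacle here, as the theorem is essentially a corollary packaging two prior facts. The only point requiring mild care is checking that the hypotheses align exactly with those of Theorem~\ref{proposicion 1}, namely that ``$E$ contains a complemented subspace isomorphic to $M$'' is interpreted in the same sense (existence of $A\in\mathcal{L}(M;E)$ and $B\in\mathcal{L}(E;M)$ with $B\circ A=I$), which is indeed the definition recorded just before Theorem~\ref{proposicion 1}. Beyond this bookkeeping, the argument is immediate.
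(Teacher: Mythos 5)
Your proposal is correct and follows essentially the same route as the paper: apply Theorem~\ref{proposicion 1} to embed $\mathcal{L}(\ell_p;\ell_q)$ as a complemented subspace of $\mathcal{L}(E;F)$, then invoke the Dineen--Mujica result that $\mathcal{L}(\ell_p;\ell_q)$ fails the approximation property together with the inheritance of the approximation property by complemented subspaces. The only minor discrepancy is a citation label: the paper attributes the failure of the approximation property for $\mathcal{L}(\ell_p;\ell_q)$ to \cite[Proposition 2.2]{JORGE}, not Proposition 2.4 (which is the result being improved).
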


\begin{proof}
 By Theorem \ref{proposicion 1} $\mathcal{L}(E;F)$ contains a complemented subspace isomorphic to $\mathcal{L}(\ell_{p};\ell_{q})$. Then the conclusion follows from  \cite[Proposition 2.2 ]{JORGE}.
 \end{proof}

The next result improves  \cite[Proposition 2.2]{JORGE}.

\begin{theorem} \label{thm:(Teorema 6)}
 Let $1<p\leq q<\infty$, and let $E$ and $F$ be closed infinite dimensional subspaces of $\ell_{p}$ and $\ell_{q}$, respectively. Then $\mathcal{L}(E;F)$
 does not have the approximation property.
 \end{theorem}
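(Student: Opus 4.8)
The plan is to reduce the statement to Theorem \ref{corolario 1} by showing that $E$ contains a complemented subspace isomorphic to $\ell_{p}$ and that $F$ contains a complemented subspace isomorphic to $\ell_{q}$. Once these two facts are in place, Theorem \ref{corolario 1} applies verbatim and delivers the conclusion, since that result requires exactly this complementation hypothesis on $E$ and $F$.

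The key ingredient is the classical theorem of Pelczynski: for $1\leq r<\infty$, every infinite dimensional closed subspace of $\ell_{r}$ contains a subspace $M$ that is isomorphic to $\ell_{r}$ and complemented in $\ell_{r}$. First I would recall why this holds, without reproving it in full. Starting from any normalized basic sequence in the given subspace, a gliding-hump argument (the Bessaga--Pelczynski selection principle) produces a subsequence equivalent to a normalized block basic sequence of the canonical unit vectors of $\ell_{r}$; every such block basic sequence spans a subspace isometrically isomorphic to $\ell_{r}$, onto which there is a natural norm-one projection defined through the associated coefficient functionals. I would cite this standard fact rather than carry out the selection argument in detail.

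Next I would pass from ``complemented in $\ell_{p}$'' to ``complemented in $E$''. If $M\subseteq E\subseteq \ell_{p}$ and $P\in\mathcal{L}(\ell_{p};\ell_{p})$ is a projection with range $M$, then the restriction $P|_{E}$ maps $E$ into $P(\ell_{p})=M\subseteq E$ and fixes $M$ pointwise, so $P|_{E}$ is a bounded projection of $E$ onto $M$. Hence $M$ is complemented in $E$. Applying the cited theorem to $E\subseteq \ell_{p}$ then produces a subspace of $E$, complemented in $E$ and isomorphic to $\ell_{p}$, and applying it to $F\subseteq \ell_{q}$ produces a subspace of $F$, complemented in $F$ and isomorphic to $\ell_{q}$.

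The argument has essentially no hard analytic core of its own; its real content is isolated in the cited Pelczynski theorem, so the main point to get right is the bookkeeping of \emph{where} each subspace is complemented. The only step worth flagging is the transitivity-of-complementation observation above, which guarantees that the complemented copies obtained inside the ambient spaces $\ell_{p}$ and $\ell_{q}$ remain complemented after restricting to the prescribed subspaces $E$ and $F$. With that verified, the hypotheses of Theorem \ref{corolario 1} are satisfied and the proof is complete.
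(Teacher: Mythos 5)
Your proposal is correct and follows essentially the same route as the paper: both invoke Pelczynski's theorem (cited in the paper as \cite[Lemma 2]{Pel} or \cite[Proposition 2.a.2]{LIN}) to find complemented copies of $\ell_{p}$ and $\ell_{q}$ inside $E$ and $F$, and then apply Theorem \ref{corolario 1}. The only difference is that you explicitly spell out the (correct) observation that a subspace $M\subseteq E$ complemented in $\ell_{p}$ is automatically complemented in $E$, a step the paper leaves implicit.
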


\begin{proof}
 By \cite[Lemma 2 ]{Pel} or \cite[Proposition 2.a.2]{LIN} $E$ and $F$ contain complemented subspaces  isomorphic to $\ell_{p}$ and $\ell_{q}$, respectively. Then the desired conclusion follows from Theorem \ref{corolario  1}.
\end{proof}

The next result complements \cite[Proposition 2.1]{JORGE}.

\begin{theorem}\label{abc}
 Let $2<p\leq q<\infty$, and let  $E$ and $ F$ be closed infinite dimensional subspaces of  $L_{p}[0,1]$ and $L_{q}[0,1]$, respectively, with  $F$  not isomorphic to $\ell_{2}$. Then $\mathcal{L}(E;F)$  does not have the approximation property.
\end{theorem}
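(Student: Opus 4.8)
The plan is to reduce the statement to Theorem \ref{corolario 1} by producing, inside each of $E$ and $F$, a complemented subspace isomorphic to a suitable $\ell_{r}$, and then checking that the two exponents are ordered correctly. The tool that does the real work is the Kadec--Pe\l czy\'nski dichotomy for subspaces of $L_{r}[0,1]$ with $r>2$: if $X$ is a closed infinite dimensional subspace of $L_{r}[0,1]$, then either the $L_{r}$ and $L_{2}$ norms are equivalent on $X$, in which case $X$ is isomorphic to $\ell_{2}$, or they are not, in which case $X$ contains a normalized sequence equivalent to the unit vector basis of $\ell_{r}$ whose closed linear span is complemented in $L_{r}[0,1]$. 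Since $2<p\leq q$, both $E\subseteq L_{p}[0,1]$ and $F\subseteq L_{q}[0,1]$ lie in the regime $r>2$ where this dichotomy is available.

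First I would apply the dichotomy to $F$. Because $F$ is infinite dimensional and, by hypothesis, \emph{not} isomorphic to $\ell_{2}$, the first alternative is ruled out; hence $F$ contains a sequence spanning a copy of $\ell_{q}$ that is complemented in $L_{q}[0,1]$. Restricting to $F$ the projection of $L_{q}[0,1]$ onto that span gives a projection of $F$ onto it, so this copy of $\ell_{q}$ is in fact complemented \emph{in} $F$. Next I would apply the dichotomy to $E$, where now both alternatives are permitted: either $E\cong\ell_{2}$, so that $E$ trivially contains a complemented subspace isomorphic to $\ell_{2}$, or $E$ contains a copy of $\ell_{p}$ complemented in $L_{p}[0,1]$ and therefore, by the same restriction argument, complemented in $E$.

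In either case $E$ contains a complemented subspace isomorphic to $\ell_{r}$ with $r\in\{2,p\}$, and since $2\leq q$ and $p\leq q$ we always have $r\leq q$. Thus $E$ and $F$ contain complemented subspaces isomorphic to $\ell_{r}$ and $\ell_{q}$ respectively, with $r\leq q$, and Theorem \ref{corolario 1} immediately yields that $\mathcal{L}(E;F)$ does not have the approximation property. The one point that demands genuine care, rather than the bookkeeping of exponents, is the invocation of the dichotomy: one must justify that $F\not\cong\ell_{2}$ forces the non-equivalence of the $L_{q}$ and $L_{2}$ norms on $F$, and that the extracted $\ell_{r}$-copies are complemented in the ambient $L_{r}[0,1]$ (whence in $E$ and $F$). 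This is precisely where the hypotheses $p,q>2$ and $F\not\cong\ell_{2}$ enter, and the case $E\cong\ell_{2}$ is exactly the reason the argument produces the exponent $2$ rather than $p$ for $E$.
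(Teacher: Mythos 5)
Your proposal is correct and takes essentially the same route as the paper's proof: both invoke the Kadec--Pe\l czy\'nski dichotomy for infinite dimensional subspaces of $L_{r}[0,1]$ with $r>2$, use the hypothesis $F\not\cong\ell_{2}$ to extract a complemented copy of $\ell_{q}$ in $F$, split into the cases $E\cong\ell_{2}$ and $E\not\cong\ell_{2}$ to obtain a complemented copy of $\ell_{2}$ or $\ell_{p}$ in $E$, and conclude via Theorem \ref{corolario 1}. Your explicit check that the extracted copies, being complemented in the ambient $L_{r}[0,1]$, are complemented in $E$ and $F$ themselves is a detail the paper leaves implicit.
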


\begin{proof}
\begin{enumerate}

\item [(i)] If $E$  is not isomorphic to $\ell_{2}$, then it follows from \cite[p. 206, 12.5]{BANACH} that $E$ and $F$ contain complemented subspaces isomorphic to $\ell_{p}$ and $\ell_{q}$, respectively. Then the desired conclusion follows  from Theorem \ref{corolario 1}.

\item [(ii)] If $E$ is isomorphic to $\ell_{2}$,  then the same argument shows that $\mathcal{L}(E;F)$ contains
a complemented subspace isomorphic to  $\mathcal{L}(\ell_{2}; \ell_{q})$, and the desired conclusion follows as before.

\end{enumerate}
\end{proof}

The next result improves \cite[Corollary 2.8]{THREE}.

 \begin{theorem}\label{proposicion 9}
If $1<p\leq q<\infty$, then $\mathcal{L}(\ell_{p};\ell_{q})/\mathcal{L}_{K} (\ell_{p};\ell_{q})$ does not have the approximation property.
\end{theorem}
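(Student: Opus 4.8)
The plan is to show that $\mathcal{L}(\ell_p;\ell_q)$ is isomorphic to a \emph{complemented} subspace of the quotient $\mathcal{L}(\ell_p;\ell_q)/\mathcal{L}_K(\ell_p;\ell_q)$. Since $\mathcal{L}(\ell_p;\ell_q)$ fails the approximation property by \cite[Proposition 2.2]{JORGE}, and the approximation property is inherited by complemented subspaces, this immediately yields the conclusion. Thus everything reduces to producing an isomorphic embedding of $\mathcal{L}(\ell_p;\ell_q)$ into the quotient together with a bounded projection onto its range.

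To build the embedding I would exploit the isometric identifications $\ell_p\cong X:=\ell_p(\ell_p)$ and $\ell_q\cong Y:=\ell_q(\ell_q)$ (countable $\ell_p$- and $\ell_q$-sums of copies of $\ell_p$, $\ell_q$), which induce an isometric isomorphism $\mathcal{L}(\ell_p;\ell_q)\cong\mathcal{L}(X;Y)$ carrying compact operators onto compact operators; hence the two quotients are isometrically isomorphic and I may work with $X,Y$. Let $I_n\colon\ell_p\to X$ and $Q_n\colon Y\to\ell_q$ be the canonical isometric injection onto, and norm-one projection from, the $n$-th summand. Define the ``constant block-diagonal'' map $J\colon\mathcal{L}(\ell_p;\ell_q)\to\mathcal{L}(X;Y)$ by $J(T)=\bigoplus_n T$. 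A short computation shows $\|J(T)\|=\|T\|$: the upper bound is where the hypothesis $p\le q$ enters, via the contraction $\ell_p\hookrightarrow\ell_q$ at the level of the sequences $(\|x_n\|_p)_n$, i.e. $\sum_n\|x_n\|_p^{\,q}\le\bigl(\sum_n\|x_n\|_p^{\,p}\bigr)^{q/p}$. Composing with the quotient map $\pi$ gives a candidate embedding $\pi\circ J$.

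For the projection I would compress $S\in\mathcal{L}(X;Y)$ to its ``diagonal blocks'' $S_n:=Q_n S I_n\in\mathcal{L}(\ell_p;\ell_q)$, which satisfy $\|S_n\|\le\|S\|$, and define $R(S)$ as the pointwise weak limit $R(S)x:=\operatorname*{w-lim}_{n\to\mathcal{U}}S_n x$ along a fixed free ultrafilter $\mathcal{U}$ on $\mathbb{N}$; this limit exists because $\ell_q$ is reflexive ($1<q<\infty$), and lower semicontinuity of the norm gives $R\in\mathcal{L}(\mathcal{L}(X;Y);\mathcal{L}(\ell_p;\ell_q))$ with $\|R\|\le1$. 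Two facts then finish the argument: first $R(J(T))=T$, since $S_n=T$ for every $n$ when $S=J(T)$; second $R$ annihilates $\mathcal{L}_K(X;Y)$, because for compact $K$ one has $\|Q_n K I_n\|\to0$. Consequently $R$ factors through the quotient as some $\bar R$ with $\|\bar R\|\le1$ and $\bar R\circ\pi=R$, whence $\bar R\circ(\pi\circ J)=R\circ J=\mathrm{id}$. This single identity shows simultaneously that $\pi\circ J$ is an isometric embedding and that $(\pi\circ J)\circ\bar R$ is a bounded projection onto its range, so that range is the desired complemented copy of $\mathcal{L}(\ell_p;\ell_q)$.

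The main obstacle is the estimate $\|Q_n K I_n\|\to0$ for compact $K$, i.e. that compact operators are invisible to the block-compression at infinity. I would prove it by noting that $\{I_n x:n\in\mathbb{N},\,\|x\|\le1\}$ is bounded, so $K$ sends it into a norm-compact set $C$, while $Q_n\to0$ pointwise on $Y$; uniform boundedness of the $Q_n$ then upgrades this to $\sup_{y\in C}\|Q_n y\|\to0$, giving $\|Q_n K I_n\|\le\sup_{y\in C}\|Q_n y\|\to0$. The only other point demanding care is the well-definedness and linearity of $R$ through the ultrafilter weak limit, both of which are routine once reflexivity of $\ell_q$ secures existence of the limits.
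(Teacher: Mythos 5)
Your proposal is correct, but it takes a genuinely different route from the paper. The paper argues by contradiction via the Godefroy--Saphar three-space theorem \cite[Theorem 2.4]{THREE}: it quotes Johnson's lemma \cite[Lemma 1]{REMARKS} to get that $\mathcal{L}_{K}(\ell_{p};\ell_{q})^{\bot}$ is complemented in $\mathcal{L}(\ell_{p};\ell_{q})'$ and \cite[Corollary 16.69]{FABIAN} to get that $\mathcal{L}_{K}(\ell_{p};\ell_{q})$ has a Schauder basis, so that if the quotient had the approximation property then $\mathcal{L}(\ell_{p};\ell_{q})$ would too, contradicting \cite[Proposition 2.2]{JORGE}. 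You instead construct, by hand, a complemented isomorphic copy of $\mathcal{L}(\ell_{p};\ell_{q})$ \emph{inside} the quotient: the block-diagonal embedding $J(T)=\bigoplus_{n}T$ on $\ell_{p}\cong\ell_{p}(\ell_{p})$ is bounded precisely because $p\le q$, the diagonal compression $R(S)=\operatorname{w-lim}_{\mathcal{U}}Q_{n}SI_{n}$ exists because $\ell_{q}$ is reflexive, $R\circ J=I$, and $R$ kills compacts since $\|Q_{n}KI_{n}\|\to 0$ (your equicontinuity-on-compacts argument for this is sound). All steps check out, and the same citation of \cite[Proposition 2.2]{JORGE} finishes the proof. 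What each approach buys: the paper's proof is shorter given the cited machinery and fits the pattern it reuses later (Theorem \ref{teo14}); yours is more elementary and self-contained (no three-space theorem, no Johnson's lemma, no Schauder basis for the compacts), and it yields the strictly stronger structural conclusion that $\mathcal{L}(\ell_{p};\ell_{q})/\mathcal{L}_{K}(\ell_{p};\ell_{q})$ contains a complemented isometric copy of $\mathcal{L}(\ell_{p};\ell_{q})$, which would also combine directly with Theorem \ref{proposicion 9.1} to give Theorems \ref{proposicion 9.2} and \ref{proposicion 9.3}.
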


\begin{proof}
 We apply \cite[Theorem 2.4]{THREE}. By \cite[ Lemma 1]{REMARKS}   $\mathcal{L}_{K}(\ell_{p};\ell_{q})^{\bot}$ is a complemented subspace of $\mathcal{L}(\ell_{p};\ell_{q})^{\prime}$. By \cite[ Corollary 16.69]{FABIAN} $\mathcal{L}_{K}(\ell_{p};\ell_{q})$ has a Schauder basis. If we assume that $\mathcal{L}(\ell_{p};\ell_{q})/\mathcal{L}_{K} (\ell_{p};\ell_{q})$ has the approximation property, then  \cite[ Theorem 2.4]{THREE} would imply that  $\mathcal{L}(\ell_{p};\ell_{q})$ has the approximation property, thus contradicting \cite[Proposition 2.2]{JORGE}.
\end{proof}

\begin{remark}
 If $1< q< p<\infty$, then  $\mathcal{L}(\ell_{p};\ell_{q})=\mathcal{L}_{K}(\ell_{p};\ell_{q})$, by a result of Pitt \cite{Pitt}. Hence the restriction $p\leq q$  in the preceding theorem cannot be deleted.
\end{remark}

 \begin{theorem}\label{proposicion 9.1}
 If $E$ and $F$ contain complemented subspaces isomorphic to $M$ and $N$, respectively, then $\mathcal{L}(E;F)/\mathcal{L}_{K} (E;F)$ contains a complemented
 subspace isomorphic to $\mathcal{L}(M;N)/\mathcal{L}_{K} (M;N)$.
\end{theorem}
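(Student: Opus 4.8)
The plan is to reuse the very same operators $C$ and $D$ constructed in the proof of Theorem \ref{proposicion 1} and to verify that they are compatible with the compact ideal, so that they descend to the quotient spaces. As before, the hypotheses provide $A_{1}\in \mathcal{L}(M;E)$, $B_{1}\in \mathcal{L}(E;M)$, $A_{2}\in \mathcal{L}(N;F)$, $B_{2}\in \mathcal{L}(F;N)$ with $B_{1}\circ A_{1}=I$ and $B_{2}\circ A_{2}=I$, and I would again set $C(S)=A_{2}\circ S\circ B_{1}$ and $D(T)=B_{2}\circ T\circ A_{1}$, so that $D\circ C=I$ on $\mathcal{L}(M;N)$.

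The key step is the observation that $C$ maps $\mathcal{L}_{K}(M;N)$ into $\mathcal{L}_{K}(E;F)$ and that $D$ maps $\mathcal{L}_{K}(E;F)$ into $\mathcal{L}_{K}(M;N)$. This is immediate from the fact that the compact operators form a two-sided operator ideal: if $S$ is compact then so is $A_{2}\circ S\circ B_{1}$, and likewise $B_{2}\circ T\circ A_{1}$ is compact whenever $T$ is. Consequently $C$ and $D$ induce bounded linear maps $\widetilde{C}$ and $\widetilde{D}$ on the quotient spaces by the rules $\widetilde{C}([S])=[C(S)]$ and $\widetilde{D}([T])=[D(T)]$, where the brackets denote the corresponding quotient classes; the preservation of the ideal is exactly what makes these maps well defined.

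Finally, since $D\circ C=I$ on $\mathcal{L}(M;N)$, I would obtain $\widetilde{D}(\widetilde{C}([S]))=[D(C(S))]=[S]$, that is $\widetilde{D}\circ\widetilde{C}=I$ on $\mathcal{L}(M;N)/\mathcal{L}_{K}(M;N)$. By the complemented-subspace criterion recalled at the beginning of this section, this shows that $\mathcal{L}(M;N)/\mathcal{L}_{K}(M;N)$ is isomorphic to a complemented subspace of $\mathcal{L}(E;F)/\mathcal{L}_{K}(E;F)$, which is the desired conclusion. There is essentially no serious obstacle here: the whole argument is a direct transfer of the proof of Theorem \ref{proposicion 1} to the quotient setting, and the only point requiring verification is the ideal property, which is routine.
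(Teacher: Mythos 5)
Your proposal is correct and follows exactly the paper's own argument: reuse the operators $C$ and $D$ from Theorem \ref{proposicion 1}, note that the ideal property of compact operators makes the induced maps $\tilde{C}$ and $\tilde{D}$ well defined on the quotients, and conclude from $\tilde{D}\circ\tilde{C}=I$. Nothing is missing.
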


\begin{proof}
By hypothesis there are
$A_{1}\in \mathcal{L}(M;E)$, $B_{1}\in \mathcal{L}(E;M)$, $A_{2}\in \mathcal{L}(N;F)$, $B_{2}\in \mathcal{L}(F;N)$ such that $B_{1}\circ A_{1}=I$ and $B_{2}\circ A_{2}=I$.
Let $C: \mathcal{L}(M;N)\rightarrow  \mathcal{L}(E;F) $ and $D: \mathcal{L}(E;F)\rightarrow  \mathcal{L}(M;N)$ be the operators from the proof of Theorem
\ref{proposicion 1}. Since $C(\mathcal{L}_{K} (M;N))\subset \mathcal{L}_{K} (E;F)$ and $D(\mathcal{L}_{K} (E;F))\subset \mathcal{L}_{K} (M;N)$, the
operators
$$\tilde{C}:[S]\in \mathcal{L}(M;N)/\mathcal{L}_{K} (M;N)\rightarrow   [A_{2}\circ S\circ B_{1}]\in \mathcal{L}(E;F)/\mathcal{L}_{K} (E;F)$$
and
$$\tilde{D}:[T]\in \mathcal{L}(E;F)/\mathcal{L}_{K} (E;F)\rightarrow   [B_{2}\circ T\circ A_{1}]\in \mathcal{L}(M;N)/\mathcal{L}_{K} (M;N)$$
are well defined, and $\tilde{D}\circ \tilde{C}=I$, thus completing the proof.
\end{proof}

\begin{theorem}\label{proposicion 9.2}
 Let $1< p\leq q<\infty$. If $E$ and $F$ contain complemented subspaces isomorphic to $\ell_{p}$ and $\ell_{q}$, respectively, then $\mathcal{L}(E;F)/\mathcal{L}_{K} (E;F)$ does not have the approximation property.
\end{theorem}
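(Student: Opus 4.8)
The plan is to imitate, at the level of these Calkin-type quotients, exactly the two-step reduction that carried Theorem \ref{proposicion 1} into Theorem \ref{corolario 1}. The relevant machinery is already assembled: Theorem \ref{proposicion 9.1} transfers a complemented copy of $\mathcal{L}(M;N)/\mathcal{L}_{K}(M;N)$ into $\mathcal{L}(E;F)/\mathcal{L}_{K}(E;F)$ whenever $E$ and $F$ contain complemented copies of $M$ and $N$, while Theorem \ref{proposicion 9} identifies a concrete quotient without the approximation property in the model case.

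Concretely, I would first apply Theorem \ref{proposicion 9.1} with $M=\ell_{p}$ and $N=\ell_{q}$. By hypothesis $E$ and $F$ contain complemented subspaces isomorphic to $\ell_{p}$ and $\ell_{q}$, so the theorem yields that $\mathcal{L}(E;F)/\mathcal{L}_{K}(E;F)$ contains a complemented subspace isomorphic to $\mathcal{L}(\ell_{p};\ell_{q})/\mathcal{L}_{K}(\ell_{p};\ell_{q})$. Next, since $1<p\leq q<\infty$, Theorem \ref{proposicion 9} tells us that this model quotient $\mathcal{L}(\ell_{p};\ell_{q})/\mathcal{L}_{K}(\ell_{p};\ell_{q})$ fails the approximation property. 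Finally I would invoke the elementary permanence fact recorded at the start of Section 2 --- that a complemented subspace of a space with the approximation property again has the approximation property --- in its contrapositive form: a Banach space harbouring a complemented subspace without the approximation property cannot itself have it. Applying this to $\mathcal{L}(E;F)/\mathcal{L}_{K}(E;F)$ closes the argument.

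Because both supporting theorems are already established, I anticipate no genuine obstacle; the proof is a direct specialization. The only points deserving a glance are purely bookkeeping: that $\mathcal{L}(E;F)/\mathcal{L}_{K}(E;F)$ is itself a Banach space, so that the permanence property applies to it verbatim, and that the map furnished by Theorem \ref{proposicion 9.1} really does land inside a complemented subspace, which is exactly what the relation $\tilde{D}\circ\tilde{C}=I$ from its proof guarantees.
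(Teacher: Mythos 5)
Your proposal is correct and follows exactly the paper's own argument: apply Theorem \ref{proposicion 9.1} with $M=\ell_{p}$, $N=\ell_{q}$ to embed $\mathcal{L}(\ell_{p};\ell_{q})/\mathcal{L}_{K}(\ell_{p};\ell_{q})$ as a complemented subspace, then invoke Theorem \ref{proposicion 9} together with the permanence of the approximation property under complemented subspaces. No issues.
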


\begin{proof}
By Theorem \ref{proposicion 9.1}    $\mathcal{L}(E;F)/\mathcal{L}_{K} (E;F)$ contains a complemented subspace isomorphic to $\mathcal{L}(\ell_{p};\ell_{q})/\mathcal{L}_{K} (\ell_{p};\ell_{q})$. Then the desired conclusion follows from Theorem \ref{proposicion 9}.
\end{proof}

By combining Theorem \ref{proposicion 9.2} and \cite[Lemma 2]{Pel} we obtain the following theorem.

\begin{theorem}\label{proposicion 9.3}
 Let $1< p\leq q<\infty$, and let $E$ and $F$ be closed infinite dimensional subspaces of  $\ell_{p}$ and $\ell_{q}$, respectively. Then $\mathcal{L}(E;F)/\mathcal{L}_{K} (E;F)$ does not have the approximation property.
\end{theorem}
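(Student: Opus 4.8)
The plan is to reduce the statement to Theorem \ref{proposicion 9.2}, whose hypothesis asks that $E$ and $F$ contain complemented subspaces isomorphic to $\ell_{p}$ and $\ell_{q}$, respectively. Since here $E$ and $F$ are merely given as closed infinite dimensional subspaces of $\ell_{p}$ and $\ell_{q}$, the entire task is to upgrade ``infinite dimensional subspace'' to ``contains a complemented copy of the ambient sequence space,'' exactly as was done in the proof of Theorem \ref{thm:(Teorema 6)}.

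First I would invoke Pe\l czy\'{n}ski's result \cite[Lemma 2]{Pel} (equivalently \cite[Proposition 2.a.2]{LIN}): every closed infinite dimensional subspace of $\ell_{p}$ contains a further subspace isomorphic to $\ell_{p}$ that is complemented in $\ell_{p}$. Applied to $E\subset \ell_{p}$, this produces a subspace $M\subset E$ with $M$ isomorphic to $\ell_{p}$, together with a bounded projection $P$ of $\ell_{p}$ onto $M$. Applied to $F\subset \ell_{q}$, it produces in the same way a subspace $N\subset F$ isomorphic to $\ell_{q}$, complemented in $\ell_{q}$.

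The one point deserving a word of justification, though it is routine, is that $M$ is complemented not only in $\ell_{p}$ but already in $E$: because the range of $P$ is $M\subset E$, the restriction $P|_{E}$ is a bounded projection of $E$ onto $M$, so $M$ is complemented in $E$. The identical remark applied to $N$ and $F$ shows that $N$ is complemented in $F$. Thus $E$ and $F$ contain complemented subspaces isomorphic to $\ell_{p}$ and $\ell_{q}$, respectively.

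With both complementation facts established, the hypotheses of Theorem \ref{proposicion 9.2} are met, and that theorem yields at once that $\mathcal{L}(E;F)/\mathcal{L}_{K}(E;F)$ fails the approximation property. I do not anticipate any serious obstacle: the only nontrivial input is the cited Pe\l czy\'{n}ski lemma, and the sole thing to verify by hand is the passage of complementation from $\ell_{p}$ (resp.\ $\ell_{q}$) down to $E$ (resp.\ $F$), which follows immediately from restricting the given projection.
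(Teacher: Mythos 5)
Your proof is correct and follows exactly the route the paper takes: the paper derives this theorem ``by combining Theorem \ref{proposicion 9.2} and \cite[Lemma 2]{Pel},'' which is precisely your reduction. Your extra remark that the projection onto $M$ restricts to a projection of $E$ onto $M$ is a sound (and welcome) elaboration of a step the paper leaves implicit.
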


\section{ Other examples of Banach spaces of linear operators  without the approximation property}

\begin{example}\label{ex1}
Let $U_{1}$ denote the universal space of Pelczynski (see, \cite[ Theorem 2.d.10]{LIN}).  $U_{1}$ is a Banach space with an unconditional
basis with the property that every  Banach space with an unconditional basis is isomorphic to a complemented subspace of $U_{1}$. Since
every $\ell_{p}$ $(1\leq p <\infty)$ has an unconditional basis, it follows that every $\ell_{p}$ $(1\leq p <\infty)$ is isomorphic to a complemented subspace of $U_{1}$. By Theorem \ref{corolario 1} none of the spaces $\mathcal{L}(U_{1};U_{1})$, $\mathcal{L}(U_{1}; \ell_{q})$ $(1<q<\infty)$ or $\mathcal{L}(\ell_{p};U_{1})$ $(1<p<\infty)$ have the approximation property.
\end{example}

\begin{definition}(see \cite[ p. 137]{LIN})

An Orlicz function M is a continuous convex nondecreasing function $M:[0,\infty)\rightarrow   \mathbb{R}$ such that $M(0)=0$ and $\lim\limits_{t \rightarrow \infty}M(t)=\infty$.
Let $$\ell_{M}=\bigg\{x=(\xi_{n})_{n=1}^{\infty}\subset \mathbb{K}: \sum_{n=1}^{\infty}M(|\xi_{n}/\rho)<\infty \ \  for \  some \ \  \rho> 0\bigg\}.$$
 Then $\ell_{M}$ is a Banach space for the norm
$$\|x\|=\inf\ \bigg\{\rho >0; \sum_{n=1}^{\infty}M(|\xi_{n}/\rho)\leq 1\bigg\}.$$
$\ell_{M}$ is called an Orlicz sequence space.
\end{definition}

\begin{example}\label{ex2}
Consider the Orlicz function $M_{p}(t)=t^{p}(1+|\log t|)$, with $1< p<\infty$. Then the Orlicz sequence space $\ell_{M_{p}}$ contains complemented subspaces isomorphic to  $\ell_{p}$ (see \cite[ p. 157]{LIN}). If $1< p\leq q<\infty$, then by Theorem \ref{corolario 1},
 $\mathcal{L}(\ell_{M_{p}};\ell_{M_{q}})$ does not have the approximation property.
\end{example}

\begin{definition}(see \cite[ p. 175]{LIN})

Let $1\leq p<\infty$ and let $w=\{w_{n}\}_{n=1}^{\infty}$ be a nonincreasing sequence of positive numbers such that $w_{1}=1$, $\lim\limits_{n \rightarrow \infty}w_{n}=0$ and $\displaystyle\sum_{n=1}^{\infty}w_{n}=\infty$. Let
$$d(w,p)= \bigg\{x=(\xi_{n})_{n=1}^{\infty}\subset \mathbb{K}: \|x\|= \sup_{\pi}\bigg(\sum_{n=1}^{\infty}|\xi_{\pi(n)}|^{p}w_{n}\bigg)^{1/p}<\infty\bigg\},$$
where $\pi$ ranges over all permutations  of $\mathbb{N}$. Then $d(w,p)$ is a Banach space, called a Lorentz sequence space.
\end{definition}

\begin{example}\label{ex3}
It follows from  \cite[ p. 177, Proposition 4.e.3]{LIN} that every closed infinite dimensional subspace of
 $d(w,p)$ contains a complemented subspace isomorphic to $\ell_{p}$. By Theorem \ref{corolario 1}, if $1< p\leq q<\infty$, and $E$ and $F$ are closed infinite dimensional subspaces of $d(w,p)$ and $d(w,q)$, respectively, then
 $\mathcal{L}(E; F)$  does not have the approximation property.
\end{example}

\section{Spaces of homogeneous polynomials without the approximation property}

Let $\mathcal{P}(^{n}E; F)$ denote the Banach space of all continuous $n$-homogeneous polynomials
from $E$ into $F$. We omit $F$ when $F = \mathbb{K}$. A polynomial $P \in \mathcal{P}(^{n}E; F)$ is called compact  if $P$ takes bounded subsets in $E$ into relatively compact subsets in $F$. Let $\mathcal{P}_{K}(^{n}E; F)$ denote the space of all compact  $n$-homogeneous polynomials from $E$ into $F$.
We refer to  \cite{SEAN} or \cite{J MUJICA} for background information on the
theory of polynomials on Banach spaces.
\bigskip

An important tool in this section is a linearization theorem due to Ryan \cite{RYAN}. We will use the following version of Ryan's linearization theorem, wich appeared in \cite{J}. Here $\tau_{c}$ denotes the compact-open topology.
\bigskip
\begin{theorem}\label{proposicion 10b}
For each Banach space $E$ and each $n\in\mathbb{N}$ let
$$Q(^{n}E)=(\mathcal{P}(^{n}E), \tau_{c})^{\prime},$$
with the norm induced by $\mathcal{P}(^{n}E)$, and let
$$\delta_{n}: x\in E\rightarrow \delta_{x}\in Q(^{n}E)$$
denote the evaluation mapping, that is, $\delta_{x}(P)=P(x)$ for every $x\in E$ and $P\in \mathcal{P}(^{n}E)$. Then $Q(^{n}E)$ is a Banach space and
$\delta_{n}\in \mathcal{P}(^{n}E; Q(^{n}E))$. The pair $(Q(^{n}E), \delta_{n})$ has the following universal property: for each Banach space $F$ and
each $P\in \mathcal{P}(^{n}E; F)$, there is a unique operator \  $T_{p}\in \mathcal{L}(Q(^{n}E); F)$ such that $T_{p}\circ \delta_{n}=P$. The mapping
$$P\in \mathcal{P}(^{n}E; F)\rightarrow  T_{p}\in \mathcal{L}(Q(^{n}E); F)$$
is an isometric isomorphism. Moreover $P \in\mathcal{P}_{K}(^{n}E; F)$ if only if \  $T_{p}\in \mathcal{L}_{K}(Q(^{n}E); F)$. Furthermore $Q(^{n}E)$ is isometrically isomorphic to $\hat{\otimes}_{n,s,\pi}E$, the completion of the space of n-symmetric tensors on $E$, with the
projective topology.

 \end{theorem}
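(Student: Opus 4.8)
The statement is Ryan's linearization theorem, and my plan is to construct $Q(^{n}E)$ as a predual of $\mathcal{P}(^{n}E)$ and then extract every assertion from one density fact about the evaluations. The engine is the Dixmier--Ng theorem: if a normed space $X$ carries a Hausdorff locally convex topology $\tau$ for which the closed unit ball $B_{X}$ is $\tau$-compact, then the space $X_{*}$ of $\tau$-continuous functionals is a Banach space and the evaluation $X\to (X_{*})'$ is a surjective isometry. I would apply this with $X=\mathcal{P}(^{n}E)$ and $\tau=\tau_{c}$. The one nontrivial hypothesis is that $B_{\mathcal{P}(^{n}E)}$ is $\tau_{c}$-compact; this I would get from an Ascoli--Arzelà argument, since a norm-bounded family of $n$-homogeneous polynomials is pointwise bounded and equicontinuous on each compact subset of $E$ (via the Cauchy/polarization estimates), giving relative $\tau_{c}$-compactness, while $\tau_{c}$-closedness is routine. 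Dixmier--Ng then yields at once that $Q(^{n}E)=(\mathcal{P}(^{n}E),\tau_{c})'$ is a Banach space and that the evaluation map $\kappa\colon\mathcal{P}(^{n}E)\to Q(^{n}E)'$, $\kappa(P)(u)=u(P)$, is an isometric isomorphism \emph{onto} $Q(^{n}E)'$.

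From $Q(^{n}E)'=\kappa(\mathcal{P}(^{n}E))$ I would deduce that $\mathrm{span}\{\delta_{x}:x\in E\}$ is norm-dense in $Q(^{n}E)$: any functional in $Q(^{n}E)'$ is $\kappa(R)$ for some $R$, and if it annihilates every $\delta_{x}$ then $R(x)=\kappa(R)(\delta_{x})=0$ for all $x$, so $R=0$. The same Hahn--Banach/bipolar argument, again using $Q(^{n}E)'=\mathcal{P}(^{n}E)$, upgrades this to the sharper identity $B_{Q(^{n}E)}=\overline{\mathrm{aco}}\,\{\delta_{x}:\|x\|\le 1\}$ (norm closure), which I will need for compactness. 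To see $\delta_{n}\in\mathcal{P}(^{n}E;Q(^{n}E))$ I would exhibit its associated symmetric $n$-linear map $A\colon E^{n}\to Q(^{n}E)$, $A(x_{1},\dots,x_{n})(P)=\check{P}(x_{1},\dots,x_{n})$: polarization writes $A(x_{1},\dots,x_{n})$ as a finite combination of evaluations, so it lands in $Q(^{n}E)$, the polarization inequality gives $\|A(x_{1},\dots,x_{n})\|\le \tfrac{n^{n}}{n!}\|x_{1}\|\cdots\|x_{n}\|$, and $A(x,\dots,x)=\delta_{x}$; a direct computation gives $\|\delta_{n}\|=\sup_{\|x\|\le1}\sup_{\|P\|\le1}|P(x)|=1$.

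For the universal property I would define $T_{P}$ on $\mathrm{span}\{\delta_{x}\}$ by $T_{P}\big(\sum_{i}\lambda_{i}\delta_{x_{i}}\big)=\sum_{i}\lambda_{i}P(x_{i})$. Testing against $g\in F'$ and using $g\circ P\in\mathcal{P}(^{n}E)$ with $\|g\circ P\|\le\|P\|$ gives $\big\|\sum_{i}\lambda_{i}P(x_{i})\big\|\le\|P\|\,\big\|\sum_{i}\lambda_{i}\delta_{x_{i}}\big\|_{Q(^{n}E)}$, so $T_{P}$ is well defined and bounded with $\|T_{P}\|\le\|P\|$ on a dense subspace, hence extends uniquely to $T_{P}\in\mathcal{L}(Q(^{n}E);F)$; density also forces uniqueness. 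Since $T_{P}\circ\delta_{n}=P$ and $\|\delta_{n}\|=1$ we get $\|P\|\le\|T_{P}\|$, so $\|T_{P}\|=\|P\|$; linearity and injectivity of $P\mapsto T_{P}$ are clear, and surjectivity follows by sending $T$ to $T\circ\delta_{n}$. Thus $P\mapsto T_{P}$ is an isometric isomorphism.

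The compactness equivalence then drops out of $B_{Q(^{n}E)}=\overline{\mathrm{aco}}\,\{\delta_{x}:\|x\|\le1\}$: continuity of $T_{P}$ gives $T_{P}(B_{Q(^{n}E)})\subseteq\overline{\mathrm{aco}}\,(P(B_{E}))$, so if $P$ is compact then $\overline{\mathrm{aco}}\,(P(B_{E}))$ is compact (the closed absolutely convex hull of a relatively compact set in a Banach space), whence $T_{P}$ is compact; conversely $P(B_{E})=T_{P}(\delta_{n}(B_{E}))\subseteq T_{P}(B_{Q(^{n}E)})$ shows $P$ is compact when $T_{P}$ is. Finally, $Q(^{n}E)\cong\hat{\otimes}_{n,s,\pi}E$ is a formal consequence of uniqueness of objects defined by a universal property: the completed symmetric projective tensor product, with the canonical polynomial $x\mapsto x\otimes\cdots\otimes x$, linearizes $\mathcal{P}(^{n}E;F)$ isometrically in exactly the sense just proved for $(Q(^{n}E),\delta_{n})$, so applying each universal property to the other's canonical polynomial produces mutually inverse isometries. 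The main obstacle is the first paragraph: establishing the $\tau_{c}$-compactness of $B_{\mathcal{P}(^{n}E)}$ and pinning down the duality $\mathcal{P}(^{n}E)=Q(^{n}E)'$, since the density of the evaluations, the well-definedness of $T_{P}$, and the compactness statement all rest on it.
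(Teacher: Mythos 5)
The paper does not prove this theorem at all: it is quoted verbatim as a known result, attributed to Ryan \cite{RYAN} in the version that appears in Mujica's linearization paper \cite{J}, so there is no in-paper argument to compare against. Your proposal is, in substance, the proof from that cited source: the Ng--Dixmier theorem applied to $\big(\mathcal{P}(^{n}E),\tau_{c}\big)$ after an Ascoli argument shows the unit ball is $\tau_{c}$-compact, followed by the density of $\mathrm{span}\{\delta_{x}\}$, the identity $B_{Q(^{n}E)}=\overline{\mathrm{aco}}\{\delta_{x}:\|x\|\le 1\}$ for the compactness equivalence, and the universal-property comparison with $\hat{\otimes}_{n,s,\pi}E$; all of these steps are sound and correctly ordered. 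The one place where you should be more careful is the very first step: the Ng--Dixmier theorem produces as predual the space of linear functionals whose restriction to the \emph{unit ball} of $\mathcal{P}(^{n}E)$ is $\tau_{c}$-continuous, which a priori contains $(\mathcal{P}(^{n}E),\tau_{c})'$ but need not equal it; to recover the theorem exactly as stated (with $Q(^{n}E)=(\mathcal{P}(^{n}E),\tau_{c})'$) you must either verify that the two spaces coincide, or note that both contain $\mathrm{span}\{\delta_{x}\}$ as a norm-dense subspace (your own Hahn--Banach argument gives this), so that they have the same norm closure and the statement survives. With that point addressed, your proof is complete and matches the argument the paper implicitly relies on.
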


\begin{theorem}\label{proposicion 10a}
If $E$ and $F$ contain complemented subspaces isomorphic to $M$ and $N$, respectively, then $\mathcal{P}(^{n}E;F)$ contains a complemented subspace
isomorphic to $\mathcal{P}(^{n}M;N)$.
 \end{theorem}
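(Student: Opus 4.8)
The plan is to follow the proof of Theorem \ref{proposicion 1} almost verbatim, replacing the space of linear operators by the space of $n$-homogeneous polynomials and using the fact that composing a polynomial with continuous linear maps on either side yields a polynomial of the same degree. By hypothesis there are $A_{1}\in \mathcal{L}(M;E)$, $B_{1}\in \mathcal{L}(E;M)$, $A_{2}\in \mathcal{L}(N;F)$, $B_{2}\in \mathcal{L}(F;N)$ with $B_{1}\circ A_{1}=I$ and $B_{2}\circ A_{2}=I$. I would then consider the operators
$$C: P\in \mathcal{P}(^{n}M;N)\rightarrow A_{2}\circ P\circ B_{1}\in \mathcal{P}(^{n}E;F)$$
and
$$D: Q\in \mathcal{P}(^{n}E;F)\rightarrow B_{2}\circ Q\circ A_{1}\in \mathcal{P}(^{n}M;N).$$

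First I would check that $C$ and $D$ are well defined. If $P\in \mathcal{P}(^{n}M;N)$, then $P\circ B_{1}\in \mathcal{P}(^{n}E;N)$, since precomposition of an $n$-homogeneous polynomial by the continuous linear map $B_{1}$ is again $n$-homogeneous and continuous; and then $A_{2}\circ(P\circ B_{1})\in \mathcal{P}(^{n}E;F)$, since postcomposition by the continuous linear map $A_{2}$ preserves $n$-homogeneity and continuity. The analogous remarks handle $D$. Both maps are plainly linear, and they are bounded: because $P$ is $n$-homogeneous one has $\|C(P)\|\leq \|A_{2}\|\,\|B_{1}\|^{n}\,\|P\|$ and $\|D(Q)\|\leq \|B_{2}\|\,\|A_{1}\|^{n}\,\|Q\|$, so indeed $C\in \mathcal{L}(\mathcal{P}(^{n}M;N);\mathcal{P}(^{n}E;F))$ and $D\in \mathcal{L}(\mathcal{P}(^{n}E;F);\mathcal{P}(^{n}M;N))$.

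Finally I would verify that $D\circ C=I$. For $P\in \mathcal{P}(^{n}M;N)$ and $x\in M$,
$$(D\circ C)(P)(x)=B_{2}\big(A_{2}\big(P\big(B_{1}(A_{1}x)\big)\big)\big)=B_{2}\big(A_{2}(P(x))\big)=P(x),$$
using $B_{1}\circ A_{1}=I$ and $B_{2}\circ A_{2}=I$. Hence $D\circ C=I$, and by the remarks recalled at the start of Section 2 this exhibits $\mathcal{P}(^{n}M;N)$ as (isomorphic to) a complemented subspace of $\mathcal{P}(^{n}E;F)$, completing the argument.

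I expect no serious obstacle: the whole content is the observation that the two-sided composition used for linear operators in Theorem \ref{proposicion 1} still makes sense for polynomials, the only point deserving a moment's care being that precomposition by $B_{1}$ rescales the polynomial norm by the factor $\|B_{1}\|^{n}$ rather than $\|B_{1}\|$, which nonetheless leaves $C$ bounded. An alternative route would be to invoke Ryan's linearization (Theorem \ref{proposicion 10b}) to replace $\mathcal{P}(^{n}E;F)$ by $\mathcal{L}(Q(^{n}E);F)$ and then appeal to Theorem \ref{proposicion 1}; but that would first require showing that $Q(^{n}E)$ contains a complemented copy of $Q(^{n}M)$, so the direct composition argument is the more economical one.
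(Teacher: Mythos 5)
Your proposal is correct and coincides with the paper's own proof: the same operators $C$ and $D$ built from $A_{1},B_{1},A_{2},B_{2}$, with $D\circ C=I$. The extra checks you include (well-definedness and the norm estimate with the factor $\|B_{1}\|^{n}$) are details the paper leaves implicit.
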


 \begin{proof}
 By hypothesis there are $A_{1}\in\mathcal{L}(M;E)$, $B_{1}\in\mathcal{L}(E;M)$, $A_{2}\in\mathcal{L}(N;F)$, $B_{2}\in\mathcal{L}(F;N)$ such that $B_{1}\circ A_{1}=I$ and $B_{2}\circ A_{2}=I$. Consider the operators
 $$C: P\in \mathcal{P}(^{n}M;N)\rightarrow A_{2}\circ P\circ B_{1}\in \mathcal{P}(^{n}E;F)$$
 and
 $$D: Q\in \mathcal{P}(^{n}E;F)\rightarrow B_{2}\circ Q\circ A_{1}\in \mathcal{P}(^{n}M;N).$$
 Then $D\circ C=I$ and the desired conclusion follows.
  \end{proof}

 \begin{corollary}\label{cori}
 If $E$ contains a complemented subspace isomorphic to $M$, then $\mathcal{P}(^{n}E)$ contains a complemented subspace isomorphic to $\mathcal{P}(^{n}M)$.
 \end{corollary}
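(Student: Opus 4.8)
The plan is to obtain this as an immediate specialization of Theorem \ref{proposicion 10a}, using the convention fixed at the start of Section~4 that $\mathcal{P}(^{n}E)$ abbreviates the scalar-valued space $\mathcal{P}(^{n}E;\mathbb{K})$. So the idea is to run Theorem \ref{proposicion 10a} with the range spaces taken to be the scalar field itself.

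First I would record the trivial observation that $\mathbb{K}$ contains a complemented subspace isomorphic to $\mathbb{K}$: indeed, taking $A_{2}=B_{2}=\mathrm{id}_{\mathbb{K}}\in\mathcal{L}(\mathbb{K};\mathbb{K})$ gives $B_{2}\circ A_{2}=I$, so the criterion ``$E$ is isomorphic to a complemented subspace of $F$ iff there exist $A\in\mathcal{L}(E;F)$ and $B\in\mathcal{L}(F;E)$ with $B\circ A=I$'' recalled in Section~2 is satisfied with $F=N=\mathbb{K}$. Thus, setting $F=\mathbb{K}$ and $N=\mathbb{K}$, the hypotheses of Theorem \ref{proposicion 10a} reduce exactly to the single assumption that $E$ contains a complemented subspace isomorphic to $M$.

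Next I would simply read off the conclusion. Theorem \ref{proposicion 10a} then asserts that $\mathcal{P}(^{n}E;\mathbb{K})$ contains a complemented subspace isomorphic to $\mathcal{P}(^{n}M;\mathbb{K})$, and interpreting both sides via the scalar-valued convention $\mathcal{P}(^{n}E;\mathbb{K})=\mathcal{P}(^{n}E)$ and $\mathcal{P}(^{n}M;\mathbb{K})=\mathcal{P}(^{n}M)$ yields precisely the stated corollary. Concretely, the embedding and projection are the operators $C\colon P\in\mathcal{P}(^{n}M)\mapsto P\circ B_{1}\in\mathcal{P}(^{n}E)$ and $D\colon Q\in\mathcal{P}(^{n}E)\mapsto Q\circ A_{1}\in\mathcal{P}(^{n}M)$ inherited from that proof (with $A_{2},B_{2}$ suppressed since they are the identity), and one has $D\circ C=I$.

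There is essentially no obstacle here: the statement is a pure specialization, and the only thing worth making explicit is the harmless bookkeeping that the scalar field is its own complemented subspace, which is what lets the bivariate theorem collapse to the univariate corollary.
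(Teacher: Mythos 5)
Your proposal is correct and is exactly the paper's argument: the paper's proof is the one-line specialization ``Take $F=N=\mathbb{K}$ in Theorem \ref{proposicion 10a}.'' The extra bookkeeping you supply (that $\mathbb{K}$ is trivially a complemented subspace of itself, and the explicit form of $C$ and $D$) is harmless and just makes the specialization fully explicit.
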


 \begin{proof}
  Take $F=N=\mathbb{K}$ in Theorem \ref{proposicion 10a}.
  \end{proof}

The next theorem improves \cite[Theorem 3.2]{JORGE}.

\begin{theorem}\label{teo10}
Let $1<p<\infty$. If $E$ contains a complemented subspace isomorphic to $\ell_{p}$, then $\mathcal{P}(^{n}E)$ does not have the approximation property for every $n\geq p$.
\end{theorem}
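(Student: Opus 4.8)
The plan is to reduce the statement to the known base case $E=\ell_{p}$ by means of the complementation machinery already assembled in this section, so that no new analytic work is required. First I would use the hypothesis directly: $E$ contains a complemented subspace isomorphic to $\ell_{p}$. Applying Corollary \ref{cori} with $M=\ell_{p}$, I obtain that $\mathcal{P}(^{n}E)$ contains a complemented subspace isomorphic to $\mathcal{P}(^{n}\ell_{p})$. This is the single structural step of the argument, and it is already packaged for me by the corollary (which in turn rests on Theorem \ref{proposicion 10a} and, ultimately, on Ryan's linearization theorem, Theorem \ref{proposicion 10b}).

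Next I would invoke the permanence property recorded in the preliminaries of Section $2$: if a Banach space has the approximation property, then every complemented subspace of it also has the approximation property. Consequently, it suffices to exhibit a single complemented subspace of $\mathcal{P}(^{n}E)$ that fails the approximation property, and the subspace isomorphic to $\mathcal{P}(^{n}\ell_{p})$ produced in the first step is the natural candidate.

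Finally, for the range $n\geq p$ the failure of the approximation property for $\mathcal{P}(^{n}\ell_{p})$ is precisely the content of \cite[Theorem 3.2]{JORGE}, cited in the Introduction. Assembling the three ingredients, I would argue by contradiction: if $\mathcal{P}(^{n}E)$ had the approximation property, then its complemented subspace isomorphic to $\mathcal{P}(^{n}\ell_{p})$ would inherit it, contradicting Dineen and Mujica. Hence $\mathcal{P}(^{n}E)$ does not have the approximation property. This mirrors exactly the pattern used for the linear-operator analogue in the proof of Theorem \ref{corolario 1}.

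I do not expect a genuine obstacle here, precisely because Corollary \ref{cori} has already absorbed the only nontrivial construction. The hypothesis $n\geq p$ plays no role in the complementation step; it enters solely through the cited base case \cite[Theorem 3.2]{JORGE}, whose proof (which I am not reproducing) is where the restriction $n\geq p$ is actually forced. Thus the whole difficulty of the present theorem is relocated into the Dineen--Mujica base case, and the remaining argument is a routine transfer of the failure of the approximation property along a complemented embedding.
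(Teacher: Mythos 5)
Your proposal is correct and coincides with the paper's own proof: both apply Corollary \ref{cori} to embed $\mathcal{P}(^{n}\ell_{p})$ as a complemented subspace of $\mathcal{P}(^{n}E)$ and then conclude from \cite[Theorem 3.2]{JORGE} via the fact that the approximation property passes to complemented subspaces. No difference in approach worth noting.
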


\begin{proof}
By Corollary \ref{cori} $\mathcal{P}(^{n}E)$ contains a complemented subspace isomorphic to $\mathcal{P}(^{n}\ell_{p})$. Then the conclusion follows from
 \cite[Theorem 3.2]{JORGE}.
\end{proof}

Theorem \ref{teo10} can be used to produce many additional counterexamples. For instance, by combining Theorem \ref{teo10} and \cite[Lemma 2]{Pel}
we obtain the following result.

\begin{theorem}\label{teo11}
Let $1<p<\infty$ and let $E$ be a closed infinite dimensional subspace of $\ell_{p}$. Then $\mathcal{P}(^{n}E)$ does not have the approximation property
for every $n\geq p$.
\end{theorem}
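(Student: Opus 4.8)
The plan is to reduce the statement to Theorem \ref{teo10}, whose hypothesis requires only that $E$ contain a complemented subspace isomorphic to $\ell_p$. Thus the entire argument hinges on upgrading the bare assumption that $E$ is a closed infinite dimensional subspace of $\ell_p$ to the stronger structural fact that $E$ carries a \emph{complemented} copy of $\ell_p$. This is precisely the input supplied by Pełczyński's lemma \cite[Lemma 2]{Pel} (equivalently \cite[Proposition 2.a.2]{LIN}), which is the same device already exploited in the proof of Theorem \ref{thm:(Teorema 6)}. Once that is in hand, the rest is a direct invocation of machinery established earlier in the paper.

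First I would apply \cite[Lemma 2]{Pel}: every closed infinite dimensional subspace of $\ell_p$ contains a subspace isomorphic to $\ell_p$ which is complemented in $\ell_p$, and since this subspace lies inside $E$, the ambient projection restricts to a projection of $E$ onto it, so it is complemented in $E$ as well. Hence $E$ contains a complemented subspace isomorphic to $\ell_p$, which is exactly the hypothesis of Theorem \ref{teo10}. Second, I would simply cite Theorem \ref{teo10} to conclude that $\mathcal{P}(^{n}E)$ fails the approximation property for every $n\geq p$. Unwinding that theorem, the mechanism is that Corollary \ref{cori} produces inside $\mathcal{P}(^{n}E)$ a complemented subspace isomorphic to $\mathcal{P}(^{n}\ell_p)$, and the latter fails the approximation property by \cite[Theorem 3.2]{JORGE}; since the approximation property passes to complemented subspaces, $\mathcal{P}(^{n}E)$ cannot have it either.

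The only genuinely nontrivial ingredient is Pełczyński's complementation lemma; everything else is the purely formal transfer of the (failure of the) approximation property through complemented subspaces that has been set up in Sections~2 and~4. I therefore expect no real obstacle beyond invoking the lemma correctly and noting the inheritance of complementation from $\ell_p$ to $E$. In effect the proof is a two-line composition: Pełczyński's lemma verifies the hypothesis of Theorem \ref{teo10}, and Theorem \ref{teo10} then delivers the conclusion verbatim.
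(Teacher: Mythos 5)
Your proposal is correct and matches the paper's own argument exactly: the paper states that Theorem \ref{teo11} is obtained "by combining Theorem \ref{teo10} and \cite[Lemma 2]{Pel}," which is precisely your two-step reduction. Your additional remark that the projection from $\ell_p$ restricts to exhibit the copy of $\ell_p$ as complemented in $E$ itself is a correct and worthwhile detail that the paper leaves implicit.
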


In a similar way we may obtain scalar-valued polynomial versions of Theorem \ref{abc} and Examples \ref{ex1}, \ref{ex2} and \ref{ex3}.
We leave the details to the reader.

\begin{theorem}\label{teo12}
Let $1<p\leq q<\infty$. If $E$ and $F$ contain complemented subspaces isomorphic to $\ell_{p}$ and $\ell_{q}$, respectively, then $\mathcal{P}(^{n}E; F)$ does not have the approximation property
for every $n\geq 1$.
\end{theorem}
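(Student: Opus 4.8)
The plan is to reduce the polynomial problem to the operator problem already solved in Theorem \ref{corolario 1}, using the two tools the paper has just set up. First I would apply Theorem \ref{proposicion 10a} with $M=\ell_{p}$ and $N=\ell_{q}$: since $E$ and $F$ contain complemented subspaces isomorphic to $\ell_{p}$ and $\ell_{q}$, it follows that $\mathcal{P}(^{n}E;F)$ contains a complemented subspace isomorphic to $\mathcal{P}(^{n}\ell_{p};\ell_{q})$. As the approximation property passes to complemented subspaces, it suffices to prove that $\mathcal{P}(^{n}\ell_{p};\ell_{q})$ fails the approximation property.

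Next I would invoke Ryan's linearization (Theorem \ref{proposicion 10b}) with $F=\ell_{q}$: the correspondence $P\mapsto T_{P}$ is an isometric isomorphism of $\mathcal{P}(^{n}\ell_{p};\ell_{q})$ onto $\mathcal{L}(Q(^{n}\ell_{p});\ell_{q})$, and since the approximation property is an isomorphic invariant, it is enough to show that $\mathcal{L}(Q(^{n}\ell_{p});\ell_{q})$ fails it. This is exactly the setting of Theorem \ref{corolario 1}, provided its hypotheses hold: $\ell_{q}$ trivially contains a complemented copy of itself, so the only remaining point — and the one that uses the inequality $p\le q$ — is to show that $Q(^{n}\ell_{p})=\hat{\otimes}_{n,s,\pi}\ell_{p}$ contains a complemented subspace isomorphic to $\ell_{p}$.

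The hard part is precisely this last claim, where the genuine work sits. Fixing the first basis vector $e_{1}\in\ell_{p}$, I would consider the linear map $\iota\colon\ell_{p}\to Q(^{n}\ell_{p})$ sending $x$ to the symmetric tensor $e_{1}^{\,n-1}\cdot x$; equivalently $\langle\iota(x),P\rangle=\check{P}(e_{1},\dots,e_{1},x)$ for $P\in\mathcal{P}(^{n}\ell_{p})$. This $\iota$ is bounded, with $\|\iota(x)\|\le\|x\|$. To complement its range I would construct a bounded left inverse $\Pi\colon Q(^{n}\ell_{p})\to\ell_{p}$ as the adjoint (restricted to $Q(^{n}\ell_{p})\subset Q(^{n}\ell_{p})''$) of the map $\ell_{p'}\to\mathcal{P}(^{n}\ell_{p})$ that sends $(c_{k})$ to the polynomial $z\mapsto z_{1}^{\,n-1}\sum_{k}c_{k}z_{k}$, built from the monomials $z_{1}^{n-1}z_{k}$. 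After normalizing these functionals, a short polarization computation gives the biorthogonality $\Pi\circ\iota=\mathrm{id}_{\ell_{p}}$, so that $\iota\circ\Pi$ is the required projection and $\iota$ is an isomorphism onto its image. The one estimate needing care — the main obstacle — is the boundedness of that auxiliary map into $\mathcal{P}(^{n}\ell_{p})$, i.e. the bound $\sup_{\|z\|_{p}\le 1}|z_{1}|^{n-1}\bigl|\sum_{k}c_{k}z_{k}\bigr|\le C\,\|(c_{k})\|_{p'}$, which follows from $|z_{1}|\le 1$ together with Hölder's inequality, uniformly in $n\ge 1$. With $\ell_{p}$ now complemented in $Q(^{n}\ell_{p})$, Theorem \ref{corolario 1} applies and finishes the proof.

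A more direct variant would bypass $Q(^{n}\ell_{p})$ entirely: the same $e_{1}^{\,n-1}$-device embeds $\mathcal{L}(\ell_{p};\ell_{q})$ as a complemented subspace of $\mathcal{P}(^{n}\ell_{p};\ell_{q})$ via $u\mapsto\bigl(x\mapsto e_{1}^{*}(x)^{n-1}u(x)\bigr)$, after which one cites \cite[Proposition 2.2]{JORGE} directly; this trades the linearization step for essentially the same projection estimate, so I would prefer the route through Theorem \ref{corolario 1} above for consistency with the rest of the section.
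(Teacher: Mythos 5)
Your argument is correct, but it takes a longer route than the paper, which disposes of the theorem in one line: by \cite[Proposition 5]{BLASCO}, $\mathcal{L}(E;F)$ itself is isomorphic to a complemented subspace of $\mathcal{P}(^{n}E;F)$, and Theorem \ref{corolario 1} then applies directly --- this is essentially the ``more direct variant'' you mention at the end and set aside. Your preferred route (reduce to $\mathcal{P}(^{n}\ell_{p};\ell_{q})$ via Theorem \ref{proposicion 10a}, linearize via Theorem \ref{proposicion 10b} to $\mathcal{L}(Q(^{n}\ell_{p});\ell_{q})$, and then exhibit a complemented copy of $\ell_{p}$ inside $Q(^{n}\ell_{p})$ by the $e_{1}^{\,n-1}\cdot x$ device) is sound; note that the complementation of $\ell_{p}=Q(^{1}\ell_{p})$ in $Q(^{n}\ell_{p})$ is exactly \cite[Theorem 3]{BLASCO}, which the paper invokes later in the proof of Theorem \ref{teo14}, so you are in effect reproving that citation by hand. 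Your sketch of it is essentially right: the only inaccuracies are cosmetic --- the bound $\|\iota(x)\|\le\|x\|$ should carry the polarization constant $n^{n}/n!$ (harmless, since only boundedness for fixed $n$ is needed), and the biorthogonality $\Pi\circ\iota=\mathrm{id}$ requires either rescaling the functionals $z\mapsto z_{1}^{n-1}z_{k}$ by $n$ or restricting to the hyperplane spanned by $\{e_{k}:k\ge 2\}$ (itself isomorphic to $\ell_{p}$), because the coordinate $k=1$ picks up a different constant under symmetrization. What your route buys is self-containedness and consistency with the linearization machinery used for Theorem \ref{teo14}; what the paper's citation buys is brevity and the full generality of Blasco's complementation of $\mathcal{L}(E;F)$ in $\mathcal{P}(^{n}E;F)$ without passing through the tensor product at all.
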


\begin{proof}

By \cite[ Proposition 5]{BLASCO}, $\mathcal{L}(E;F)$ is isomorphic to a complemented subspace of $\mathcal{P}(^{n}E;F)$. Then the desired conclusion follows from
Theorem \ref{corolario 1}.
\end{proof}

Theorem \ref{teo12} can be used to produce many additional counterexamples. For instance, by combining Theorem \ref{teo12} and \cite[Lemma 2]{Pel} we
obtain the following result.

\begin{theorem}\label{teo13}
Let $1<p\leq q<\infty$. and let $E$ and $F$ be closed infinite dimensional subspaces of $\ell_{p}$  and $\ell_{q}$, respectively. Then $\mathcal{P}(^{n}E; F)$ does not have the approximation property
for every $n\geq 1$.
\end{theorem}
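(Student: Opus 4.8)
The plan is to reduce the statement to Theorem~\ref{teo12}, which already disposes of spaces \emph{containing} complemented copies of $\ell_p$ and $\ell_q$. The only gap to bridge is the passage from ``closed infinite dimensional subspace of $\ell_p$'' to ``contains a complemented subspace isomorphic to $\ell_p$,'' and this is exactly the content of the Pelczynski lemma already invoked earlier in the paper.

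First I would apply \cite[Lemma 2]{Pel} (the same tool used in the proofs of Theorems~\ref{thm:(Teorema 6)} and \ref{proposicion 9.3}), which guarantees that every closed infinite dimensional subspace of $\ell_r$, for $1\le r<\infty$, contains a complemented subspace isomorphic to $\ell_r$. Applying this separately to $E\subset\ell_p$ and to $F\subset\ell_q$, I obtain that $E$ contains a complemented subspace isomorphic to $\ell_p$ and $F$ contains a complemented subspace isomorphic to $\ell_q$.

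With both hypotheses of Theorem~\ref{teo12} now verified, I would invoke that theorem directly to conclude that $\mathcal{P}(^{n}E;F)$ does not have the approximation property for every $n\ge 1$, which is precisely the assertion.

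Since both ingredients are quoted results, there is essentially no obstacle here: the substance resides entirely in Theorem~\ref{teo12} (which itself rests on the complementation of $\mathcal{L}(E;F)$ inside $\mathcal{P}(^{n}E;F)$ via \cite[Proposition 5]{BLASCO} together with Theorem~\ref{corolario 1}) and in the cited Pelczynski lemma. The combination is the same two-line argument already used to derive Theorems~\ref{teo11} and \ref{proposicion 9.3} from their respective ``complemented subspace'' versions, so I expect the proof to be a short corollary rather than an independent computation.
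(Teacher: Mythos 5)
Your proof is correct and is exactly the paper's argument: the paper derives Theorem~\ref{teo13} by the one-line remark ``combining Theorem~\ref{teo12} and \cite[Lemma 2]{Pel},'' which is precisely your reduction via Pelczynski's lemma to the complemented-subspace version.
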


In a similar way we may obtain vector-valued polynomial versions of Theorem \ref{abc} and Examples \ref{ex1}, \ref{ex2} and \ref{ex3}. We leave the details
to the reader.

\begin{theorem}\label{teo14}
 If $n<p\leq q<\infty$. Then $\mathcal{P}(^{n}\ell_{p}; \ell_{q})/\mathcal{P}_{K}(^{n}\ell_{p}; \ell_{q})$ does not have the approximation property.
\end{theorem}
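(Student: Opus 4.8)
The plan is to mimic the structure of Theorem~\ref{proposicion 9}, transporting the linear-operator result through Ryan's linearization theorem. The key observation is that Theorem~\ref{proposicion 10b} gives an isometric isomorphism $\mathcal{P}(^{n}E;F)\cong\mathcal{L}(Q(^{n}E);F)$ which, by the ``moreover'' clause, carries $\mathcal{P}_{K}(^{n}E;F)$ onto $\mathcal{L}_{K}(Q(^{n}E);F)$. Consequently this isomorphism descends to the quotients, giving
\begin{equation*}
\mathcal{P}(^{n}E;F)/\mathcal{P}_{K}(^{n}E;F)\;\cong\;\mathcal{L}(Q(^{n}E);F)/\mathcal{L}_{K}(Q(^{n}E);F).
\end{equation*}
So it suffices to prove that the right-hand quotient, with $E=\ell_{p}$, $F=\ell_{q}$, fails the approximation property. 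First I would invoke the machinery behind Theorem~\ref{proposicion 9}, namely \cite[Theorem 2.4]{THREE}: a quotient $X/Y$ has the approximation property whenever $X$ does, $Y$ has a Schauder basis, and $Y^{\perp}$ is complemented in $X'$. I would argue the contrapositive: if $\mathcal{L}(Q(^{n}\ell_{p});\ell_{q})/\mathcal{L}_{K}(Q(^{n}\ell_{p});\ell_{q})$ had the approximation property, then—once the three hypotheses of \cite[Theorem 2.4]{THREE} are verified for $X=\mathcal{L}(Q(^{n}\ell_{p});\ell_{q})$ and $Y=\mathcal{L}_{K}(Q(^{n}\ell_{p});\ell_{q})$—the space $\mathcal{L}(Q(^{n}\ell_{p});\ell_{q})$ itself would have the approximation property, contradicting Theorem~\ref{corolario 1}.

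The contradiction step is where the hypothesis $n<p$ is essential. To apply Theorem~\ref{corolario 1} I must know that $Q(^{n}\ell_{p})=\hat{\otimes}_{n,s,\pi}\ell_{p}$ and $\ell_{q}$ contain complemented subspaces isomorphic to $\ell_{r}$ and $\ell_{s}$ for some exponents $1<r\leq s<\infty$. The space $\ell_{q}$ trivially contains a complemented copy of $\ell_{q}$. For $Q(^{n}\ell_{p})$, the relevant fact is that the symmetric projective tensor product of $\ell_{p}$ contains a complemented subspace isomorphic to $\ell_{p/(p-n)}$ (equivalently, to a suitable $\ell_{r}$ with $1<r<\infty$) precisely when $n<p$; this is the polynomial analogue exploited in \cite{JORGE}, and the condition $n<p$ guarantees the conjugate-type exponent lies in the admissible open range. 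One then checks $r\leq s$ holds (or reduces to it) using $p\leq q$, so that Theorem~\ref{corolario 1} applies and yields the failure of the approximation property for $\mathcal{L}(Q(^{n}\ell_{p});\ell_{q})$.

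The remaining ingredients are the two structural hypotheses of \cite[Theorem 2.4]{THREE} applied to the compact operators. For $Y^{\perp}$ complemented in $X'$, I would cite \cite[Lemma 1]{REMARKS} exactly as in the proof of Theorem~\ref{proposicion 9}, now with domain $Q(^{n}\ell_{p})$ in place of $\ell_{p}$. For the Schauder basis of $\mathcal{L}_{K}(Q(^{n}\ell_{p});\ell_{q})$, I would appeal to \cite[Corollary 16.69]{FABIAN}, which requires suitable basis and approximation-type properties on the factors; verifying its hypotheses for $Q(^{n}\ell_{p})$ is the one nonroutine point, since $Q(^{n}\ell_{p})$ is not itself an $\ell_{p}$-space.

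The main obstacle I anticipate is precisely this last verification: ensuring that $Q(^{n}\ell_{p})=\hat{\otimes}_{n,s,\pi}\ell_{p}$ has whatever basis/approximation structure \cite[Corollary 16.69]{FABIAN} demands of it, so that $\mathcal{L}_{K}(Q(^{n}\ell_{p});\ell_{q})$ genuinely admits a Schauder basis. One expects this to go through because symmetric projective tensor products of spaces with shrinking unconditional bases inherit good basis structure, but identifying the correct citation and confirming the hypotheses is the delicate part. Everything else is a faithful transcription of the argument for Theorem~\ref{proposicion 9} across the linearization isomorphism.
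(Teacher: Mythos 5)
Your skeleton matches the paper's: linearize via Theorem~\ref{proposicion 10b}, identify the quotient with $\mathcal{L}(Q(^{n}\ell_{p});\ell_{q})/\mathcal{L}_{K}(Q(^{n}\ell_{p});\ell_{q})$, and run the contrapositive of \cite[Theorem 2.4]{THREE} against the known failure of the approximation property for the full operator space. But the point you explicitly leave open --- verifying that $\mathcal{L}_{K}(Q(^{n}\ell_{p});\ell_{q})$ has a Schauder basis so that \cite[Corollary 16.69]{FABIAN} applies --- is not a routine loose end; it is precisely where the hypothesis $n<p$ does its work, and without closing it the proof is incomplete. The paper closes it by citing \cite[Remark 3.3]{JORGE}: for $n<p$ the space $\mathcal{P}(^{n}\ell_{p})$ is reflexive and has a Schauder basis, hence its predual $Q(^{n}\ell_{p})$ is also reflexive with a Schauder basis, and then the Fabian et al.\ corollary yields a basis for the compact operators. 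For $n\geq p$ this fails badly ($\mathcal{P}(^{n}\ell_{p})$ is then nonseparable), so there is no way around supplying this argument.

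You have also misplaced where $n<p$ enters the contradiction step. No diagonal-subspace analysis is needed there: by \cite[Theorem 3]{BLASCO}, $\ell_{p}=Q(^{1}\ell_{p})$ is a complemented subspace of $Q(^{n}\ell_{p})$ for every $n$, so Theorem~\ref{corolario 1} applies directly with exponents $p\leq q$ and gives the failure of the approximation property for $\mathcal{L}(Q(^{n}\ell_{p});\ell_{q})$ without any restriction on $n$. (Your proposed exponent $\ell_{p/(p-n)}$ is in any case not the right one for the projective symmetric tensor power --- the complemented diagonal copy in $\hat{\otimes}_{n,s,\pi}\ell_{p}$ is isomorphic to $\ell_{p/n}$ when $p>n$; the exponent $p/(p-n)$ belongs to the dual $\mathcal{P}(^{n}\ell_{p})$.) So the fix is: use Blasco for the complemented copy of $\ell_{p}$, and reserve $n<p$ for the reflexivity-and-basis argument that legitimizes the Schauder basis hypothesis of \cite[Theorem 2.4]{THREE}.
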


\begin{proof}
By Theorem \ref{proposicion 10b} we can write
$$\mathcal{P}(^{n}\ell_{p}; \ell_{q})=\mathcal{L}(Q(^{n}\ell_{p}); \ell_{q})$$
and
$$\mathcal{P}_{K}(^{n}\ell_{p}; \ell_{q})=\mathcal{L}_{K}(Q(^{n}\ell_{p}); \ell_{q}).$$
We apply \cite[Theorem 2.4]{THREE}. By \cite[Lemma 1]{REMARKS} $\mathcal{L}_{K}(Q(^{n}\ell_{p}); \ell_{q})^{\perp}$ is a complemented subspace of
$\mathcal{L}(Q(^{n}\ell_{p}); \ell_{q})^{\prime}$. By \cite[Remark 3.3]{JORGE} $\mathcal{P}(^{n}\ell_{p})$ is a reflexive
Banach space with a Schauder basis. Hence $Q(^{n}\ell_{p})$ is also a reflexive Banach space with a Schauder basis. Then
by \cite[Corollary 16.69]{FABIAN} $\mathcal{L}_{K}(Q(^{n}\ell_{p}); \ell_{q})$ has a Schauder basis. If we assume that
$\mathcal{L}(Q(^{n}\ell_{p}); \ell_{q})/\mathcal{L}_{K}(Q(^{n}\ell_{p}); \ell_{q})$ has the approximation property,
then \cite[Theorem 2.4]{THREE} would imply that $\mathcal{L}(Q(^{n}\ell_{p}); \ell_{q})$ has the approximation property. But this
contradicts Theorem \ref{proposicion 9.2}, since $\ell_{p}=Q(^{1}\ell_{p})$ is a complemented subspace of $Q(^{n}\ell_{p})$, by
\cite[Theorem 3]{BLASCO}. This completes the proof.
\end{proof}

\begin{theorem}\label{teo20}
If $E$ and $F$ contain complemented subspaces isomorphic to $M$ and $N$, respectively, then
 $\mathcal{P}(^{n}E; F)/\mathcal{P}_{K}(^{n}E; F)$  contains a complemented subspace isomorphic to $\mathcal{P}(^{n}M; N)/\mathcal{P}_{K}(^{n}M; N)$.
\end{theorem}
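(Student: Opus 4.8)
The plan is to imitate the proof of Theorem \ref{proposicion 9.1}, replacing the spaces of linear operators by the corresponding spaces of homogeneous polynomials and reusing the two composition maps already constructed in the proof of Theorem \ref{proposicion 10a}.

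First I would invoke the hypothesis to obtain $A_{1}\in\mathcal{L}(M;E)$, $B_{1}\in\mathcal{L}(E;M)$, $A_{2}\in\mathcal{L}(N;F)$, $B_{2}\in\mathcal{L}(F;N)$ with $B_{1}\circ A_{1}=I$ and $B_{2}\circ A_{2}=I$, and then recall the operators
$$C: P\in \mathcal{P}(^{n}M;N)\rightarrow A_{2}\circ P\circ B_{1}\in \mathcal{P}(^{n}E;F)$$
and
$$D: Q\in \mathcal{P}(^{n}E;F)\rightarrow B_{2}\circ Q\circ A_{1}\in \mathcal{P}(^{n}M;N)$$
from the proof of Theorem \ref{proposicion 10a}, which already satisfy $D\circ C=I$.

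The crucial point is to check that these maps respect compactness, that is, that $C(\mathcal{P}_{K}(^{n}M;N))\subset \mathcal{P}_{K}(^{n}E;F)$ and $D(\mathcal{P}_{K}(^{n}E;F))\subset \mathcal{P}_{K}(^{n}M;N)$. This I would verify directly from the definition of a compact polynomial, using that a bounded linear operator carries bounded sets to bounded sets and relatively compact sets to relatively compact sets: if $P$ is compact, then $B_{1}$ sends a bounded set to a bounded set, $P$ sends that to a relatively compact set, and $A_{2}$ preserves relative compactness, whence $A_{2}\circ P\circ B_{1}$ is compact; the argument for $D$ is entirely symmetric. I expect this compactness bookkeeping to be the only step requiring care, and it is routine.

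Finally, having established that $C$ and $D$ send compact polynomials to compact polynomials, the induced quotient maps
$$\tilde{C}:[P]\in \mathcal{P}(^{n}M;N)/\mathcal{P}_{K}(^{n}M;N)\rightarrow [A_{2}\circ P\circ B_{1}]\in \mathcal{P}(^{n}E;F)/\mathcal{P}_{K}(^{n}E;F)$$
and
$$\tilde{D}:[Q]\in \mathcal{P}(^{n}E;F)/\mathcal{P}_{K}(^{n}E;F)\rightarrow [B_{2}\circ Q\circ A_{1}]\in \mathcal{P}(^{n}M;N)/\mathcal{P}_{K}(^{n}M;N)$$
are well defined and bounded. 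Since $D\circ C=I$ on $\mathcal{P}(^{n}M;N)$, passing to cosets yields $\tilde{D}\circ \tilde{C}=I$, which exhibits $\mathcal{P}(^{n}M;N)/\mathcal{P}_{K}(^{n}M;N)$ as isomorphic to a complemented subspace of $\mathcal{P}(^{n}E;F)/\mathcal{P}_{K}(^{n}E;F)$, completing the proof.
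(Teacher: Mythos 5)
Your proposal is correct and follows essentially the same route as the paper: it reuses the operators $C$ and $D$ from the proof of Theorem \ref{proposicion 10a}, checks that they preserve compact polynomials, and passes to the quotients to get $\tilde{D}\circ\tilde{C}=I$. The only difference is that you spell out the (routine) compactness verification, which the paper states without proof.
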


\begin{proof}
By hypothesis there are $A_{1}\in\mathcal{L}(M;E)$, $B_{1}\in\mathcal{L}(E;M)$, $A_{2}\in\mathcal{L}(N;F)$, $B_{2}\in\mathcal{L}(F;N)$ such that $B_{1}\circ A_{1}=I$ and $B_{2}\circ A_{2}=I$. Let
$$C: \mathcal{P}(^{n}M;N)\rightarrow  \mathcal{P}(^{n}E;F)$$
 and
$$D: \mathcal{P}(^{n}E;F)\rightarrow \mathcal{P}(^{n}M;N)$$
be the operators from the proof of Theorem \ref{proposicion 10a}. Since $C(\mathcal{P}_{K}(^{n}M; N))\subset \mathcal{P}_{K}(^{n}E; F)$ and
$D(\mathcal{P}_{K}(^{n}E; F))\subset \mathcal{P}_{K}(^{n}M; N)$, the operators
$$\tilde{C}:[P]\in \mathcal{P}(^{n}M; N)/\mathcal{P}_{K}(^{n}M; N)\rightarrow [A_{2}\circ P\circ B_{1}]\in \mathcal{P}(^{n}E; F)/\mathcal{P}_{K}(^{n}E; F)$$
and
$$\tilde{D}:[Q]\in \mathcal{P}(^{n}E; F)/\mathcal{P}_{K}(^{n}E; F)\rightarrow [B_{2}\circ Q\circ A_{1}]\in \mathcal{P}(^{n}M; N)/\mathcal{P}_{K}(^{n}M; N)$$
are well-defined and $\tilde{D}\circ \tilde{C} =I$, thus completing the proof.

\end{proof}

\begin{theorem}\label{teo21}
Let $n<p\leq q<\infty$. If $E$ and $F$ contain complemented subspaces isomorphic to  $\ell_{p}$ and $\ell_{q}$, respectively,
then $\mathcal{P}(^{n}E; F)/ \mathcal{P}_{K}(^{n}E; F)$ does not have the approximation property.
\end{theorem}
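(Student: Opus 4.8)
The plan is to follow verbatim the template already established for the operator-space quotients in Theorem \ref{proposicion 9.2}, transposed to the polynomial setting. First I would apply Theorem \ref{teo20} with $M=\ell_{p}$ and $N=\ell_{q}$. Since by hypothesis $E$ and $F$ contain complemented subspaces isomorphic to $\ell_{p}$ and $\ell_{q}$ respectively, that theorem produces a complemented subspace of $\mathcal{P}(^{n}E;F)/\mathcal{P}_{K}(^{n}E;F)$ which is isomorphic to $\mathcal{P}(^{n}\ell_{p};\ell_{q})/\mathcal{P}_{K}(^{n}\ell_{p};\ell_{q})$.

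Next I would invoke the hereditary behaviour of the approximation property recalled in Section 2: if a Banach space has the approximation property, then so does every complemented subspace. Consequently, to prove that the ambient quotient $\mathcal{P}(^{n}E;F)/\mathcal{P}_{K}(^{n}E;F)$ fails the approximation property, it suffices to exhibit one complemented subspace that fails it. The concrete counterexample is furnished directly by Theorem \ref{teo14}, which asserts precisely that $\mathcal{P}(^{n}\ell_{p};\ell_{q})/\mathcal{P}_{K}(^{n}\ell_{p};\ell_{q})$ lacks the approximation property under the very hypothesis $n<p\leq q<\infty$ assumed here. Combining the complemented embedding with this negative result yields the conclusion at once.

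As for obstacles, there is essentially none remaining at the level of this statement, since all the substantive work has been front-loaded into the two auxiliary theorems. The genuinely delicate step is the one already discharged inside Theorem \ref{teo14}: there Ryan's linearization (Theorem \ref{proposicion 10b}) is used to identify the polynomial quotient with the operator quotient $\mathcal{L}(Q(^{n}\ell_{p});\ell_{q})/\mathcal{L}_{K}(Q(^{n}\ell_{p});\ell_{q})$, after which the Godefroy--Saphar criterion is applied, exploiting the reflexivity and Schauder-basis structure of $\mathcal{P}(^{n}\ell_{p})$, hence of $Q(^{n}\ell_{p})$. It is exactly the strict inequality $n<p$ that secures this reflexivity, so the hypothesis cannot be weakened to $n\leq p$; everything else in the present argument is purely formal functoriality of the construction $P\mapsto A_{2}\circ P\circ B_{1}$ together with the fact that this construction preserves compactness, which is what makes the induced maps $\tilde{C},\tilde{D}$ on the quotients well defined.
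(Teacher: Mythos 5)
Your proposal is correct and coincides with the paper's own proof: both apply Theorem \ref{teo20} to embed $\mathcal{P}(^{n}\ell_{p};\ell_{q})/\mathcal{P}_{K}(^{n}\ell_{p};\ell_{q})$ as a complemented subspace of $\mathcal{P}(^{n}E;F)/\mathcal{P}_{K}(^{n}E;F)$ and then conclude from Theorem \ref{teo14} via the hereditary property of the approximation property for complemented subspaces. Your added remarks correctly locate the substantive work inside Theorem \ref{teo14}, where the hypothesis $n<p$ is actually used.
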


\begin{proof}
By Theorem \ref{teo20}  $\mathcal{P}(^{n}E; F)/ \mathcal{P}_{K}(^{n}E; F)$ contains a complemented subspace isomorphic to $\mathcal{P}(^{n}\ell_{p}; \ell_{q})/ \mathcal{P}_{K}(^{n}\ell_{p}; \ell_{q})$. Thus the desired conclusion follows from Theorem \ref{teo14}.
\end{proof}
\bigskip

Theorem \ref{teo21} can be used to produce many additional counterexamples. For instance by combining Theorem \ref{teo21} and \cite[Lemma 2]{Pel} we obtain
the following theorem.

\begin{theorem}\label{teo22}
Let $n<p\leq q<\infty$, and let $E$ and $F$ be closed infinite dimensional subspaces of  $\ell_{p}$ and $\ell_{q}$, respectively.
Then $\mathcal{P}(^{n}E; F)/ \mathcal{P}_{K}(^{n}E; F)$ does not have the approximation property.
\end{theorem}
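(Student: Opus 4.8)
The plan is to reduce the general statement to the concrete case already established in Theorem \ref{teo14}, exactly as the preceding ``abstract-to-concrete'' theorems in this paper do. The key observation is that by hypothesis $E$ and $F$ contain complemented subspaces isomorphic to $\ell_{p}$ and $\ell_{q}$, respectively, so Theorem \ref{teo20} (applied with $M=\ell_{p}$ and $N=\ell_{q}$) immediately yields that $\mathcal{P}(^{n}E; F)/\mathcal{P}_{K}(^{n}E; F)$ contains a complemented subspace isomorphic to $\mathcal{P}(^{n}\ell_{p}; \ell_{q})/\mathcal{P}_{K}(^{n}\ell_{p}; \ell_{q})$.

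First I would invoke \cite[Lemma 2]{Pel} (equivalently \cite[Proposition 2.a.2]{LIN}) to upgrade the hypothesis from the closed infinite dimensional subspaces $E \subset \ell_{p}$ and $F \subset \ell_{q}$ to the statement that $E$ and $F$ contain complemented subspaces isomorphic to $\ell_{p}$ and $\ell_{q}$; this is the standard fact that every closed infinite dimensional subspace of $\ell_{r}$ ($1\le r<\infty$) contains a complemented subspace isomorphic to $\ell_{r}$. With that upgrade in hand, the hypotheses of Theorem \ref{teo21} are satisfied verbatim, so I would simply apply Theorem \ref{teo21} to conclude that $\mathcal{P}(^{n}E; F)/\mathcal{P}_{K}(^{n}E; F)$ does not have the approximation property.

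The logical skeleton is therefore a two-line chain: the subspace structure of $\ell_{p}$ and $\ell_{q}$ feeds \cite[Lemma 2]{Pel}, which feeds Theorem \ref{teo21} (itself resting on Theorem \ref{teo20} and Theorem \ref{teo14}). There is essentially no computation to carry out, since all the genuine work—the complementation of the quotient via $\tilde{C},\tilde{D}$ with $\tilde{D}\circ\tilde{C}=I$, and the failure of the approximation property for the model space $\mathcal{P}(^{n}\ell_{p}; \ell_{q})/\mathcal{P}_{K}(^{n}\ell_{p}; \ell_{q})$—has already been absorbed into the cited theorems. The only point requiring a moment's care, and hence the main (minor) obstacle, is making sure the constraint $n<p\le q<\infty$ is carried through unchanged: this inequality is exactly what Theorem \ref{teo14} needs (so that $\mathcal{P}(^{n}\ell_{p})$ is reflexive with a Schauder basis and $\ell_{p}=Q(^{1}\ell_{p})$ is complemented in $Q(^{n}\ell_{p})$), and since the present theorem imposes the identical hypothesis $n<p\le q<\infty$, no extra verification is needed. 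The proof then reads: by \cite[Lemma 2]{Pel} the spaces $E$ and $F$ contain complemented subspaces isomorphic to $\ell_{p}$ and $\ell_{q}$, and the conclusion follows from Theorem \ref{teo21}.
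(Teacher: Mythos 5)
Your proposal is correct and matches the paper's own argument exactly: the paper states that Theorem \ref{teo22} follows by combining Theorem \ref{teo21} with \cite[Lemma 2]{Pel}, which is precisely the two-step reduction you describe. No gaps.
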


The interest in the study of the approximation property in spaces of homogeneous polynomials begun in $1976$ with a paper of Aron and Schottenloher
\cite{ARON}. They begun the study of the approximation property on the space $\mathcal{H}(E)$ of all holomorphic functions on $E$ under various topologies.
Among many other results they proved that $(\mathcal{H}(E),\tau_{w})$ has the approximation property if and only if $\mathcal{P}(^{n}E)$ has the
approximation property for every $n\in \mathbb{N}$. Here $\tau_{w}$ denotes the compact-ported topology introduced by Nachbin. They also proved
that $\mathcal{P}(^{n}\ell_{1})$ has the approximation property for every $n\in \mathbb{N}$. Ryan \cite{RYAN} proved that $\mathcal{P}(^{n}c_{0})$ has
a Schauder basis, and in particular has the approximation property, for every $n\in\mathbb{N}$.
Tsirelson \cite{T} constructed a reflexive Banach space $X$, with an unconditional Schauder basis, which contains no subspace isomorphic to any
$\ell_{p}$. By using a result of Alencar, Aron and Dineen \cite{AR}, Alencar \cite{AL} proved that $\mathcal{P}(^{n}X)$ has a Schauder basis,
and in particular has the approximation property, for every $n\in\mathbb{N}$. In a series of papers Dineen and Mujica \cite{D1} \cite{D2} \cite{D3}
have extended some of the results of Aron and Schottenloher \cite{ARON} to spaces of holomorphic functions defined on arbitrary open sets.


\begin{thebibliography}{99}                                                                                               %

\bibitem{AL}
\newblock R. Alencar,   {\it On reflexivity and basis for $\mathcal{P}(^{m}E)$,}  Proc. Roy. Irish Acad ., 85
(1985) 131- 138.


\bibitem{AR}
\newblock R. Alencar, R. Aron, S. Dineen,  {\it  A reflexive space of holomorphic functions
in infinitely many variables,} Proc. Amer. Math. Soc., 90 ( 1984) 407- 411.

\bibitem{ARON}
\newblock R.M.Aron, M. Schottenloher,  {\it Compact holomorphic mappings on Banach spaces and
the approximation property .} J. Funct. Anal. 21 (1976), 7–30.


\bibitem{BLASCO}
\newblock F. Blasco,  {\it Complementation of symmetric tensor products and polynomials, } Studia Math. 123 (1997) 165-173.

\bibitem{LORENTZ}
\newblock P. G. Casazza and B. Lin, {\it On symmetric basic sequences in Lorentz sequences spaces II}, Israel. J. Math. 17 (1974), 191-218.

\bibitem{Díaz}
\newblock J. C. Díaz, S. Dineen, {\it Polynomials on stable spaces}. Ark. Mat. 36 (1998), 87–96.


\bibitem{SEAN}
\newblock S. Dineen,  {\it Complex Analysis on Infinite Dimensional Spaces, } Springer, 1999.

\bibitem{D1}
\newblock S. Dineen, J.Mujica:  {\it The approximation property for spaces of holomorphic functions
on infinite dimensional spaces. I. } J. Approx. Theory 126 (2004), 141–156.


\bibitem{D2}
\newblock S. Dineen, J.Mujica:  {\it The approximation property for spaces of holomorphic functions
on infinite dimensional spaces. II. }J. Funct. Anal. 259 (2010), 545–560.

\bibitem{D3}
\newblock S. Dineen, J.Mujica:  {\it The approximation property for spaces of holomorphic functions
on infinite dimensional spaces. III. }Rev. R. Acad. Cienc. Exactas Fís. Nat., Ser. A Mat.,
RACSAM 106 (2012), 457–469.





\bibitem{JORGE}
\newblock S. Dineen, J. Mujica,  {\it Banach spaces of homogeneous polynomials without the approximation property, } Czechoslovak Math. J., 65 (140) (2015) 367-374.

\bibitem{ENFLO}
\newblock P. Enflo, {\it A counterexample to the approximation problem in Banach spaces}. Acta Math.
130 (1973), 309–317.


\bibitem{THREE}
\newblock G. Godefroy, P.D. Saphar,  {\it Three-space problems for the approximation properties, }  Proc. Amer. Math. Soc. 105, 70-75 (1989).

\bibitem{GR}
\newblock A. Grothendieck, {\it Produits Tensoriels Topologiques et Espaces Nucléaires}. Mem. Amer.
Math. Soc. 16 (1955), 140 pages. (In French.)


\bibitem{REMARKS}
\newblock J. Johnson,  {\it Remarks on Banach spaces of compact operators},  J. Funct. Anal. 32 (1979), 304-311.

\bibitem{LIN}
\newblock J. Lindenstrauss and L. Tzafriri,  {\it Classical Banach spaces I}, Springer, Berlin  1977.

\bibitem{LIND}
\newblock J. Lindenstrauss and L. Tzafriri,  {\it Classical Banach spaces II}, Springer, Berlin  1979.

\bibitem{FABIAN}
\newblock F. Marián, P. Habala, P. Hájek,  V. Montesinos, V. Zizler,  {\it Banach
Space Theory: The Basis for Linear and Nonlinear Analysis}. New York, Springer, 2011.



\bibitem{J MUJICA}
\newblock J. Mujica, {\it Complex Analysis in Banach Spaces. Holomorphic Functions and Domains of
Holomorphy in Finite and Infinite Dimensions.} North-Holland Math. Stud. 120. Notas
de Matemática 107, North-Holland, Amsterdam, 1986.

\bibitem{J}
\newblock J. M u j i c a, {\it Linearization of bounded holomorphic mappings on Banach spaces},
Trans. Amer. Math. Soc., 324 (1991) 867-887.


\bibitem{Pel}
\newblock A. Pelczy$\acute{n}$ski, {\it Projections in certain Banach spaces},  Stud. Math. 19 (1960), 209-228.

\bibitem{BANACH}
\newblock A. Pelczynski, C. Bessaga, {\it Some aspects of the present theory of Banach spaces, in: S. Banach, Theory of Linear Operations }, North- Holland Mathematical Library Vol. 38, North- Holland, Amsterdam, 1987.

\bibitem{Pitt}
\newblock H. Pitt, {\it A note on bilinear forms,} London Math. Soc., 11 (1936) 174- 180.


\bibitem{RYAN}
\newblock R. Ryan, {\it Applications of topological tensor products to infinite dimensional
holomorphy}, Ph. D. Thesis, Trinity College, Dublin 1980.

\bibitem{SZ}
\newblock A. Szankowski, {\it B(H) does not have the approximation property}. Acta Math. 147 (1981),
89–108.

\bibitem{T}
\newblock B. Tsirelson, {\it Not every Banach space contains an imbedding of $\ell_{p}$ or $c_{0}$}, Functional Anal. Appl., 9 (1974) 138- 141.
\end{thebibliography}
\end{document}